\newtheorem{defn}{Definition}[section]
\newtheorem{lem}[defn]{Lemma}
\newtheorem{thm}[defn]{Theorem}
\newtheorem{prop}[defn]{Proposition}
\newtheorem{cor}[defn]{Corollary}
\newtheorem{rem}[defn]{Remark}
\theoremstyle{definition}
\numberwithin{equation}{section}
\let\ptd=\partial
\def\bE{\mathbb{E}}
\def\bP{\mathbb{P}}
\def\bR{\mathbb{R}}
\def\bT{\mathbb{T}}
\def\cA{\mathcal{A}}
\def\cH{\mathcal{H}}
\let\D=\Delta
\let\ep=\epsilon
\let\Si=\Sigma
\let\pa=\partial
\let\ptd=\partial
\newcommand{\fr}[2]{\frac{#1}{#2}}
\begin{document}
    \title{Generic ill-posedness for wave equation of power type on 3D torus }
    \author{Bo XIA}
    \address{Laboratoire de Math\'ematiques, University Paris-Sud 11, F-91405.}
    \email{bo.xia@math.u-psud.fr}
    \thanks{The author is supported by CSC.}
    \maketitle

    \begin{abstract}
      In this article, we prove that the equation
      \begin{equation*}
        \left\{\begin{split}
            &(\ptd^2_t-\Delta)u+|u|^{p-1}u=0,\ \ \ 3\leq p<5\\
            &\big(u(0),\ptd_tu(0)\big)=(u_0,u_1)\in H^{s}(\bT^3)\times H^{s-1}(\bT^3)=:\cH^s(\bT^3)
        \end{split}\right.
    \end{equation*}
    with $s<\frac{3}{2}-\frac{2}{p-1}$ is everywhere ill-posed. This work also indicates that, only properly regularizing the initial data can we smoothly approximate the solutions constructed in \cite{BT14} and \cite{Xia14}.
    \end{abstract}
\section{Introduction}
    In this article we consider the semi-linear wave equation
    \begin{equation}\label{eqn:p}
        \left\{\begin{split}
            &(\ptd^2_t-\Delta)u+|u|^{p-1}u=0,\ \ \ 3\leq p<5\\
            &\big(u(0),\ptd_tu(0)\big)=(u_0,u_1)\in H^{s}(\bT^3)\times H^{s-1}(\bT^3)=:\cH^s(\bT^3)
        \end{split}\right.
    \end{equation}
and we are interested in the ill-posedness issues in the classical Hardamard sense for the super-critical case $0<s<\fr{3}{2}-\fr{2}{p-1}$. We obtained that the set of data, initiated at which Equation (\ref{eqn:p}) is ill-posed, is dense in $\cH^s(\bT^3)$. To explain this clearly, we first review some development for the Cauchy problem for the equation \eqref{eqn:p}.

There is an extensive literature, in the past thirty years, dedicated to the well-posedness of the equation \eqref{eqn:p} in the (sub-)critical case $s\geq\fr{3}{2}-\fr{2}{p-1}$. The first rigorous proof of the local existence and uniqueness for the (sub-)critical equation \eqref{eqn:p} was obtained by Lindblad-Sogge in \cite{LS95} by using Strichartz estimates. After this, there arose a lot of works on the scattering theory and the growth of Sobolev norms of solutions to  Equation (\ref{eqn:p}). However, the Cauchy problem is still quite open in the super-critical case. To this end, Lebeau \cite{Leb} first proved, based on the local-in-time asymptotic analysis of geometric optics, that the solutions to Equation (\ref{eqn:p}) with $p$ any odd integer not smaller than $7$ are unstable. And then, Christ-Colliander-Tao via a small dispersion analysis in \cite{CCT03} and  Burq-Tzvetkov via a direct ODE approximation proved in \cite{BT07a} that the equation \eqref{eqn:p} is not well-posed by contradicting the continuous dependence on the initial data around the origin $(u_0,u_1)=(0,0)\in\cH^s(\bT^3)$. For more information on other approaches to obtain ill-posedness results for other dispersive equations, see \cite{kenig2001ill} and \cite{christ2003asymptotics}.

Now that the super-critical equation is ill-posed, in what sense can we have the well-posedness result similar to that of the (sub-)critical equation? In order to answer this question, by introducing the concept "probabilistic Hardamard well-posedness", Burq-Tzvetkov proved the local well-posedness result in \cite{BT07a} and the global well-posedness result in \cite{BT14} for the cubic case, which is generalized in \cite{Xia14} to the general power $3\leq p<5$ with the restricted regularity $\frac{2(p-3)}{p-1}<s<\fr{3}{2}-\fr{2}{p-1}$. Precisely, these results can be summarized as
\begin{thm}[\cite{BT14},\cite{Xia14}]\label{thm:pro}
    Let $1>s>\frac{2(p-3)}{p-1}$ for $3\leq p<5$ and any $(u_0,u_1)\in\cH^s(\bT^3)$. Then there exist a probability measure $\mu$ depending on $(u_0,u_1)$, which is supported on $\cH^s(\bT^3)$ and is of positive probability on any non-empty open subset of $\cH^s(\bT^3)$, and a subset $\Sigma\subset\cH^s(\bT^3)$ of full $\mu$-measure such that for any $(v_0,v_1)\in\Sigma$, there is a unique global solution $v$ to the nonlinear equation
    \begin{eqnarray}\label{eqn.proba.main}
        (\ptd^2_t-\Delta)v+|v|^{p-1}v=0,\ \ (v(0),\ptd_tv(0))=(v_0,v_1)
    \end{eqnarray}
    satisfying
    \[
        \big(v(t),\ptd_tv(t)\big)\in (S(t)(v_0,v_1),\ptd_tS(t)(v_0,v_1))+C(\bR_t;\cH^1)\subset C(\bR_t;\cH^s).
    \]
    Furthermore, we can specially choose $\mu$ such that the flow $\Phi(t)$ generated by the equation \eqref{eqn.proba.main} is continuous in the following sense: for any $\ep>0,\ \tilde{T}>0$, there exists $\eta>0$ such that
    \begin{multline*}
      \mu\otimes\mu\Big((V_0,V_1)\in \cH^s\times\cH^s\colon \|\Phi(t)(V_0)-\Phi(t)(V_1)\|_{X_{\tilde{T}}}>\ep\big| \\ \|V_0-V_1\|_{\cH^s}\leq \eta \text{ and } (V_0,V_1)\in B_{\Lambda}\times B_{\Lambda}\Big)\leq g(\ep,\eta),
    \end{multline*}
    where $X_{\tilde{T}}\equiv\big(C([0,\tilde{T}];H^s)\cap L^{\frac{2p}{p-3}}([0,T];L^{2p})\big)\times C([0,\tilde{T}];H^{s-1})$, $B_{\Lambda}\equiv\big\{V\in\cH^s\colon\|V\|_{\cH^s}\leq \Lambda\big\}$, and $g(\ep,\eta)$ satisfies
    \[
        \lim_{\eta\rightarrow 0}g(\ep,\eta)=0,\ \ \ \forall \ep>0.
    \]
  \end{thm}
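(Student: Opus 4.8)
The plan is to realize $\mu$ as the law of a randomization of $(u_0,u_1)$ and then to run a probabilistic Picard iteration for the difference of the nonlinear and free evolutions, following the scheme of \cite{BT14} and \cite{Xia14}. Fix $(u_0,u_1)\in\cH^s(\bT^3)$, expand it in the orthonormal eigenbasis $\{e_n\}_{n\in\bZ^3}$ of $-\D$ on $\bT^3$, and replace each Fourier coefficient by its product with an independent mean-zero random variable $g_n(\om)$ of positive density on $\bR$ (say standard Gaussian) defined on a probability space $(\Om,\cA,\bP)$; let $\mu$ be the image of $\bP$ under $\om\mapsto(u_0^\om,u_1^\om)$. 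A direct computation gives $\mu(\cH^s(\bT^3))=1$, and since the $g_n$ charge every interval one checks that $\mu$ is positive on every non-empty open subset of $\cH^s(\bT^3)$. The engine of the argument is the family of probabilistic Strichartz estimates for the randomized free evolution $z^\om(t):=S(t)(u_0^\om,u_1^\om)$: Khinchine's inequality combined with the deterministic $L^q_tL^r_x$ wave estimates yields, for every finite $T$ and $(q,r)$ in a range strictly wider than the deterministic Strichartz range,
\[
  \bP\big(\|z^\om\|_{L^q([0,T];L^r(\bT^3))}>\la\big)\le C\exp\!\big(-c\la^2/\|(u_0,u_1)\|_{\cH^s}^2\big),
\]
and the hypothesis $s>\tfrac{2(p-3)}{p-1}$ is exactly what makes $z^\om\in L^p([0,T];L^{2p}(\bT^3))$ — and the other Strichartz norms used below — hold for $\bP$-almost every $\om$.

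Next I would seek the solution as $v=z^\om+w$, so that $w$ solves the forced, energy-subcritical equation $(\ptd_t^2-\D)w+|z^\om+w|^{p-1}(z^\om+w)=0$ with zero Cauchy data. For $\om$ in the almost-sure set of the first step the nonlinearity expands into a smoothing self-interaction $|w|^{p-1}w$ plus terms each carrying at least one factor $z^\om$, and a contraction mapping in $C([0,\d];\cH^1)\cap(\text{Strichartz space})$ — combining the fractional Leibniz rule, H\"older, Sobolev embedding, and the almost-sure Strichartz bounds on $z^\om$ to absorb the $z^\om$-terms — produces a unique local $w\in C([0,\d];\cH^1)$, which already gives the stated inclusion $(v,\ptd_tv)\in(z^\om,\ptd_tz^\om)+C(\bR_t;\cH^1)$ on the interval of existence. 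To globalize, I would run the energy identity for $E(w)=\tfrac12\|\ptd_tw\|_{L^2}^2+\tfrac12\|\nabla w\|_{L^2}^2+\tfrac1{p+1}\|w\|_{L^{p+1}}^{p+1}$: the purely-$w$ contributions cancel, and the remaining terms are bounded, by H\"older and $\cH^1\hookrightarrow L^6$, by the finite-time Strichartz norms of $z^\om$ times a subcritical power of $1+E(w)$ — this is where $p<5$ enters. Gronwall then bounds $E(w)$ on every $[0,T]$, and since the local existence time depends only on $E(w)$ and the relevant norms of $z^\om$, iteration extends $w$, hence $v$, to all of $\bR_t$.

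Finally, for the continuity of the flow $\Phi(t)$, fix $\ep,\ti T,\La$ and let $V_0,V_1$ be drawn independently from $\mu$, writing $\Phi(t)(V_0)=S(t)(V_0)+w$ and $\Phi(t)(V_1)=S(t)(V_1)+\ti w$, so $\Phi(t)(V_0)-\Phi(t)(V_1)=S(t)(V_0-V_1)+(w-\ti w)$. The first term has $C_tH^s$-norm $\le\|V_0-V_1\|_{\cH^s}$, while conditionally on the event $\{\|V_0-V_1\|_{\cH^s}\le\eta\}\cap(B_\La\times B_\La)$ its remaining Strichartz norm exceeds $\la$ only on a set of probability $\le C\exp(-c\la^2/\eta^2)$. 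The difference $w-\ti w$ solves a forced equation with zero data whose right-hand side is a difference of nonlinearities, so on the event that all relevant Strichartz norms of $z^\om$ and $\ti z^\om$ over $[0,\ti T]$ are bounded by $R$ — of conditional probability $\ge1-C\exp(-cR^2/\La^2)$ — a Gronwall estimate yields $\|w-\ti w\|_{X_{\ti T}}\le C(\La,\ti T,R)\big(\eta+\|S(t)(V_0-V_1)\|_{(\text{Strichartz})}\big)$. Collecting these bounds, the conditional probability in the statement is at most $C\exp(-cR^2/\La^2)+C\exp\!\big(-c\ep^2/(\eta^2C(\La,\ti T,R)^2)\big)=:g(\ep,\eta)$, and letting $R=R(\eta)$ diverge slowly enough as $\eta\to0$ to dominate the growth of the Gronwall constant forces $g(\ep,\eta)\to0$.

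I expect this last step to be the main obstacle: one must check that conditioning on $\{\|V_0-V_1\|_{\cH^s}\le\eta\}$ preserves the Gaussian-type concentration underlying the probabilistic Strichartz bounds for $z^\om$ and $\ti z^\om$ (a linear constraint on a Gaussian vector still leaves a well-concentrated law on the affine slice), and one must quantify how fast $C(\La,\ti T,R)$ can blow up in $R$ so that the two-parameter optimization actually closes. By comparison, the global energy control of $w$ is essentially routine once $p<5$ keeps the nonlinearity energy-subcritical, and the construction of $\mu$ together with the probabilistic Strichartz estimates is by now standard.
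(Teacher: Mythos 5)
First, note that the paper does not prove Theorem \ref{thm:pro}: it is quoted from \cite{BT14} and \cite{Xia14}, and your sketch correctly reproduces the architecture of those works (randomization of the Fourier coefficients, the decomposition $v=z^\om+w$ with $w$ solving a forced problem at $\cH^1$ regularity, large-deviation bounds for $z^\om$, and a perturbative argument for the conditional continuity of the flow). One of your steps, however, would not close as written, and it is the crucial one. In the globalization argument, after the pure $w$-contributions cancel in $\frac{d}{dt}E(w)$, the leading cross term is $p\int \ptd_t w\,|w|^{p-1}z\,dx$, and estimating it by H\"older and $\cH^1\hookrightarrow L^6$ gives at best $C\|z\|_{L^r_x}\,E(w)^{p/2}$, which is \emph{superlinear} in $E(w)$ for $p>2$; Gronwall then yields only finite-time control, not global existence. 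The actual argument of \cite{BT14} and \cite{Xia14} integrates this term by parts in time, i.e., works with a modified energy of the form $E(w)+c\int |w|^{p-1}w\,z\,dx$, whose derivative produces $\int |w|^{p-1}w\,\ptd_t z\,dx\lesssim E(w)^{p/(p+1)}\|\ptd_tz\|_{L^{p+1}_x}$ and is sublinear; it is in closing this step (and the companion cross terms present when $p>3$) with $\ptd_t z$ only at the regularity of randomized $H^{s-1}$ data that the restriction $s>\frac{2(p-3)}{p-1}$ is actually consumed. Your attribution of that restriction to the integrability of $z^\om$ is therefore misplaced: on $\bT^3$ the randomization already places $z^\om$ in every $L^q_tL^r_x$ with $q,r<\infty$ as soon as $s\geq 0$ (this is essentially Theorem \ref{l2largeprobability}), so no constraint on $s$ arises there.

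A smaller gap: the claim that $\mu$ charges every non-empty open subset of $\cH^s(\bT^3)$ fails for the bare randomization \eqref{randomization} whenever some Fourier coefficients of $(u_0,u_1)$ vanish (in the extreme case $(u_0,u_1)=(0,0)$ your $\mu$ is a Dirac mass at the origin); one must first replace the vanishing coefficients by a nonzero square-summable tail, as in \cite{BT14}, before the full support of the Gaussians gives positivity on open sets. The construction of $\mu$, the probabilistic Strichartz estimates, the local theory for $w$, and the structure of your conditional-continuity argument are otherwise consistent with the cited proofs.
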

  \begin{rem}
    Recently, Chenmin Sun and the author in \cite{C-X15}, by following the ideas used by Oh-Pocovnicu \cite{OhPo} solving energy-critical wave equation, improved the regularity $s>\frac{2(p-3)}{p-1}$ required to solve Equation (\ref{eqn:p}) to $s>\frac{p-3}{p-1}$, but we do not have the probabilistic dependence on the initial data there.  
  \end{rem}
  Theorem \ref{thm:pro} says that we can construct a probability on proper function space (e.g. $H^s\times H^{s-1}$ in our case) and show that the wave equation (\ref{eqn:p}) is globally well-posed almost surely and moreover the flow generated by the equation is conditionally continuous.  Interestingly, as pointed out by Burq-Tzvetkov, it is also possible to show that these solutions are limits of smooth solutions obtained by properly regularized initial data. In fact we have
\begin{prop}\label{rem:w.p.}
  Given $(v_0,v_1)\in\Si$, where $\Si$ is the invariant set in \ref{thm:pro}. We truncate $(v_{0,n},v_{1,n})=(\chi(2^{-|n|}|\D|)v_0,\chi(2^{-|n|}|\D|)v_1)$ with $\chi$ a radial bump function equal to $1$ in the unit interval $[0,1]$ and vanishing outside $[0,2]$, then almost surely, the solutions $v_n(t)$ to the equation \eqref{eqn.proba.main} issued from $(v_{0,n},v_{1,n})$ converge, as $n$ tends to infinity, to the solution $v(t)$ with initial data $(v_0,v_1)$ in the space $C(\bR,\cH^s)$. Consequently we have that $v_n(t)$ converges almost surely to $v(t)$ in $C(\bR,\cH^s)$ as $n\rightarrow\infty$.
\end{prop}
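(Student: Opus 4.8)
The plan is to run the Burq--Tzvetkov decomposition of Theorem~\ref{thm:pro} and to treat the linear and the nonlinear parts of $v_n-v$ separately. Fix $(v_0,v_1)\in\Si$; since $\mu(\Si)=1$ it is enough to prove that $v_n\to v$ in $C([-T,T];\cH^s)$ for every $T>0$. Write $v=S(t)(v_0,v_1)+w$, where by Theorem~\ref{thm:pro} the remainder $w\in C(\bR;\cH^1)$ solves $(\ptd_t^2-\D)w=-F(S(t)(v_0,v_1)+w)$ with $(w,\ptd_tw)(0)=0$, $F(z)=|z|^{p-1}z$, and where $S(t)(v_0,v_1)$ belongs to the Strichartz space $L^{\frac{2p}{p-3}}_{\mathrm{loc}}(\bR;L^{2p}(\bT^3))$ (and its relatives) used in \cite{BT14,Xia14} --- this membership is part of the very construction of $\Si$. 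Put $\Pi_n:=\chi(2^{-|n|}|\D|)$, a smooth frequency cut-off; then $(v_{0,n},v_{1,n})$ are trigonometric polynomials, so (the range $3\le p<5$ being energy-subcritical and defocusing) equation \eqref{eqn.proba.main} has a unique global smooth solution $v_n$. Set $w_n:=v_n-S(t)(v_{0,n},v_{1,n})\in C(\bR;\cH^1)$; it solves $(\ptd_t^2-\D)w_n=-F(S(t)(v_{0,n},v_{1,n})+w_n)$ with $(w_n,\ptd_tw_n)(0)=0$ and source $S(t)(v_{0,n},v_{1,n})=\Pi_nS(t)(v_0,v_1)$.

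For the linear part I would note that $S(t)(v_{0,n},v_{1,n})-S(t)(v_0,v_1)=S(t)\bigl((\Pi_n-\mathrm{Id})v_0,(\Pi_n-\mathrm{Id})v_1\bigr)$, and that $\mathrm{Id}-\Pi_n$ is supported on frequencies $\gtrsim 2^{c|n|}\ge 1$ (some $c>0$) for $n\ne 0$, a range on which $\cos(t|\nabla|)$ and $\tfrac{\sin(t|\nabla|)}{|\nabla|}$ are bounded by $1$ uniformly in $t$; hence
\[
\sup_{t\in\bR}\bigl\|S(t)(v_{0,n},v_{1,n})-S(t)(v_0,v_1)\bigr\|_{\cH^s}\lesm\|(\mathrm{Id}-\Pi_n)v_0\|_{H^s}+\|(\mathrm{Id}-\Pi_n)v_1\|_{H^{s-1}}\longrightarrow 0,
\]
so the free parts converge in $C(\bR;\cH^s)$, in fact uniformly on all of $\bR$. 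Moreover, since $\Pi_n$ is bounded on $L^{2p}(\bT^3)$ uniformly in $n$ and $\Pi_n\to\mathrm{Id}$ strongly there, dominated convergence in $t$ gives $\Pi_nS(t)(v_0,v_1)\to S(t)(v_0,v_1)$ in $L^{\frac{2p}{p-3}}([-T,T];L^{2p})$ for every $T>0$.

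For the nonlinear remainder the key observation is that the equations for $w$ and $w_n$ carry the \emph{same} (vanishing) Cauchy data and differ only through their sources $S(t)(v_0,v_1)$ and $\Pi_nS(t)(v_0,v_1)$; equivalently $w_n$ solves the $w$-equation with forcing error $e_n:=F(S(t)(v_0,v_1)+w_n)-F(\Pi_nS(t)(v_0,v_1)+w_n)$. Since $3\le p<5$, the $w$-equation is an energy-subcritical NLW perturbed by the lower-order term $S(t)(v_0,v_1)$, whose Strichartz norm on $[-T,T]$ is finite, and the usual local theory closes in $C_t\cH^1\cap L^{\frac{2p}{p-3}}_tL^{2p}_x$ (exactly as in \cite{BT14,Xia14}). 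I would set $R:=\|w\|_{C([-T,T];\cH^1)}+1$, choose $\eta_1=\eta_1(R)>0$ small enough that the contraction for the $w$-equation --- and, using $\|\Pi_ng\|_{L^{2p}}\lesm\|g\|_{L^{2p}}$, for the $w_n$-equations too --- closes on any subinterval $I$ with $\|S(t)(v_0,v_1)\|_{L^{2p/(p-3)}_IL^{2p}}\le\eta_1$ and $\cH^1$-data of size $\le 2R$, and split $[-T,T]$ into finitely many such intervals $I_1,\dots,I_K$. Writing $a_j^n:=\|(w_n-w)(t_j)\|_{\cH^1}$ with $t_j$ the left endpoint of $I_j$ (so $a_1^n=0$) and $b_j^n:=\|(\Pi_n-\mathrm{Id})S(t)(v_0,v_1)\|_{L^{2p/(p-3)}_{I_j}L^{2p}}\to 0$, the stability estimate on each $I_j$ gives $\|w_n-w\|_{C(I_j;\cH^1)}\lesm_R a_j^n+b_j^n$ and hence $a_{j+1}^n\lesm_R a_j^n+b_j^n$, so by induction $a_j^n\lesm_{R,K}\sum_{i<j}b_i^n\to 0$ and therefore $\|w_n-w\|_{C([-T,T];\cH^1)}\to 0$.

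Combining the two parts, $\|v_n-v\|_{C([-T,T];\cH^s)}\le\|S(\cdot)(v_{0,n},v_{1,n})-S(\cdot)(v_0,v_1)\|_{C([-T,T];\cH^s)}+C_T\|w_n-w\|_{C([-T,T];\cH^1)}\to 0$, which proves the convergence for every $(v_0,v_1)\in\Si$, hence $\mu$-almost surely in $C(\bR;\cH^s)$. I expect the real difficulty to lie in the nonlinear step: one has no a priori uniform-in-$n$ \emph{global} Strichartz control on the $w_n$, so a single short-time fixed point with continuous dependence on the source yields convergence only on one small interval; extending it across an arbitrary $[-T,T]$ forces the finite iteration above, and the delicate bookkeeping is to keep the size $R$ (determined by $w$ alone) decoupled from the smallness threshold $\eta_1=\eta_1(R)$ so that the induction closes without circularity.
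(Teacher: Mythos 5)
Your proposal is correct and is essentially the paper's own argument: the paper merely sketches "apply the Burq--Tzvetkov strategy to the difference equation for $w_n=v-v_n$", and since that strategy consists precisely of splitting off the free evolution $S(t)\bigl((1-\Pi_n)v_0,(1-\Pi_n)v_1\bigr)$ (your linear part) and running the deterministic $\cH^1$ local theory plus an iteration over finitely many small Strichartz intervals for the smoother remainder (your $w_n-w$), your write-up is a faithful, more detailed realization of the same proof.
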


Indeed, to prove this statement, we only need to apply the Burq-Tzvetkov strategy (see \cite{BT14} or \cite{Xia14}) to the equation satisfied by $w_n=v-v_n$
\begin{equation*}
  \left\{
        \begin{split}
            &(\ptd^2_t-\Delta)w_n+|v|^{p-1}v-|v_n|^{p-1}v_n=0 \\
            &\big(w_n(0),\ptd_tw_n(0)\big)=\big((1-\chi(2^{-|n|}\D))v_0,(1-\chi(2^{-|n|}\D))v_1\big),
          \end{split}
          \right.
    \end{equation*}
    and we can establish that $\mu\big((v_0,v_1)\in\Si\colon \|w_n\|_{C(\bR,\cH^s)}\rightarrow_{n\rightarrow\infty}0\big)=1$.

    Now a natural question coming up is the stability of this regularizing process. At a first sight, it might be difficult to define this stability, so we ask this question in a slightly different manner: what's happening when we are using a different regularizing procedure? The purpose of this work is to address this question, and indeed, via the ODE approach used in \cite{BT07a}, we obtain
    \begin{thm}\label{cor:1}
    Let $s\in(0,\fr{3}{2}-\fr{2}{p-1})$ fixed and $(u_0,u_1)\in\cH^s$. Then for any $\epsilon>0$, there exists a sequence $\big((u_{0n},u_{1n})\big)^{\infty}_{n=1} \in C^{\infty}_c$ converging to $(u_0,u_1)$ in $\cH^s$ such that the solutions $\big(u_n(t)\big)$ to the equations
    \begin{equation*}
        \begin{cases}
            &(\ptd^2_t-\Delta)u_n+|u_n|^{p-1}u_n=0\\
            &\big(u_n(0),\ptd_tu_n(0)\big)=(u_{0n},u_{1n})
        \end{cases}
    \end{equation*}
    satisfy
    \[
        \|u_n\|_{L^{\infty}\big([0,\epsilon],\cH^s\big)}\rightarrow_{n\rightarrow+\infty}+\infty.
    \]
\end{thm}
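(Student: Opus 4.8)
The plan is to reduce the statement to a single building block: for data concentrated at a high frequency $N$, the genuine PDE solution is well approximated on a short time interval by the solution of the ODE $\ddot{a} + |a|^{p-1}a = 0$, whose period scales like $A^{-(p-1)/2}$ where $A$ is the amplitude. We exploit the fact that $\cH^s$-norm of the linear profile at frequency $N$ with amplitude $A$ is of size $A N^{s-3/2}$ (the $3/2$ coming from the $L^2$ normalization of a bump of width $N^{-1}$ on $\bT^3$), so in the supercritical regime $s<\tfrac32-\tfrac2{p-1}$ one can let $N\to\infty$ while keeping a \emph{small} $\cH^s$-norm for the initial perturbation yet forcing the solution to develop a large $\cH^s$-norm in time $\lesssim \epsilon$. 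Concretely, first I would fix the background $(u_0,u_1)$ and, given $\epsilon>0$, add to it a perturbation $(\phi_N,0)$ where $\phi_N(x) = A_N N^{-3/2}\kappa(N(x-x_0)) e^{iN\omega\cdot x}$ (suitably made real and $C^\infty_c$ via a cutoff), choosing $A_N$ and $N$ so that $\|(\phi_N,0)\|_{\cH^s}\to 0$ along the sequence, while $A_N\to\infty$; this makes $(u_{0n},u_{1n})$ converge to $(u_0,u_1)$ in $\cH^s$.

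The second step is the ODE approximation à la Burq--Tzvetkov \cite{BT07a}: on a ball of radius $\sim N^{-1}$ around $x_0$, finite speed of propagation confines the relevant dynamics, and after rescaling $t\mapsto A_N^{(p-1)/2}t$, $x\mapsto Nx$ the Laplacian term becomes a negligible perturbation on the natural ODE time scale $T_N \sim A_N^{-(p-1)/2}$ (which tends to $0$, hence is $\le\epsilon$ for large $n$). One shows, via an energy/Gronwall estimate for $v_N := u_n - (\text{linear flow of }u_0) - a_N$, where $a_N(t,x)$ solves the spatially-parametrized ODE with initial amplitude profile $A_N N^{-3/2}\kappa(N(x-x_0))$, that $v_N$ stays small in the energy space on $[0,T_N]$. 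The point is then that at a time $t_N\in(0,T_N)$ where the ODE profile $a_N(t_N,\cdot)$ oscillates with an extra factor of order $N$ in its gradient (one full ODE oscillation deposits spatial oscillation because the period depends on the amplitude, which varies across the bump), the $\cH^s$-norm (really the $\dot H^s$ part, $s>0$) of $a_N(t_N,\cdot)$ is of order $A_N N^{s-3/2}\cdot N^{\theta}$ for some $\theta>0$ capturing the amplitude-dependent phase, so it blows up as $N\to\infty$ even though the initial $\cH^s$-norm went to zero; the contributions of the $u_0$-linear flow and of $v_N$ are controlled and do not cancel this growth.

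The main obstacle, as usual in this circle of ideas, is making the decoupling rigorous: one must show that the error $v_N$ does \emph{not} itself grow to cancel the norm inflation of $a_N$. This requires (i) a careful choice of how fast $A_N\to\infty$ relative to $N$ — fast enough that $\|(\phi_N,0)\|_{\cH^s} = A_N N^{s-3/2}\to 0$, i.e. $A_N \ll N^{3/2-s}$, yet slow enough (or rather, $N$ chosen large enough given $A_N$) that the parabolic-type remainder estimate closes on $[0,T_N]$; in the supercritical window $s<\tfrac32-\tfrac2{p-1}$ such a compatible choice exists precisely because the exponent budget $3/2-s$ beats $2/(p-1)$, which is what is needed for the Laplacian to be subcritical on the ODE time scale. (ii) handling the nonlinear cross terms $|u_n|^{p-1}u_n - |S(t)(u_0,u_1)+a_N|^{p-1}(\cdots)$ with only $3\le p<5$ (so $|u|^{p-1}u$ is merely $C^{1,p-2}$, not smooth), which forces the use of fractional-Leibniz/Hölder estimates rather than a naive Taylor expansion, and a bootstrap on a norm no stronger than $\cH^1$ for $v_N$. (iii) Finally, one sums over the sequence: a standard diagonal/telescoping argument upgrades the ``for each large $N$'' statement into a single sequence $(u_{0n},u_{1n})\to(u_0,u_1)$ with $\|u_n\|_{L^\infty([0,\epsilon];\cH^s)}\to\infty$, using that $T_N\to 0$ so that all the relevant blow-up times eventually lie in $[0,\epsilon]$.
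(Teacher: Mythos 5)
Your overall strategy (concentrated bump of amplitude $A_N$ at scale $N^{-1}$, comparison with the spatially‑parametrized ODE $\ddot a+|a|^{p-1}a=0$, norm inflation from the amplitude‑dependent period) is exactly the Burq--Tzvetkov mechanism the paper uses, but there is a genuine gap in how you treat the background. You add the perturbation $(\phi_N,0)$ directly to the \emph{rough} data $(u_0,u_1)\in\cH^s$ and run the error estimate for $v_N=u_n-S(t)(u_0,u_1)-a_N$. This fails on three counts. First, the theorem requires $(u_{0n},u_{1n})\in C^{\infty}_c$, and $u_0+\phi_N$ is not smooth when $u_0$ is merely $H^s$. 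Second, at supercritical regularity there is no well‑posedness theory for data $(u_0+\phi_N,u_1)$ with $u_0$ rough, so your objects $u_n$ need not exist. Third, and most seriously for the analysis, the cross terms such as $|S(t)(u_0,u_1)|^{p-1}a_N$ cannot be placed in any $L^2$‑based energy norm when $S(t)(u_0,u_1)$ is only in $H^s$ with $s<\fr{3}{2}-\fr{2}{p-1}$ (e.g.\ $s<1/2$ for $p=3$): the Gronwall step needs the non‑profile part of the solution to have small semiclassical energy at $t=0$, which uses finiteness of its $\cH^2$ norm. The correct structure is the paper's two‑step one: prove the quantitative inflation for a \emph{smooth compactly supported} background (Proposition \ref{thm:main} / Proposition \ref{prop:ilpos:gene}, where the whole background is put into the error $w_n=u_n-v_n$ and $E_n(w_n(0))\sim n^{-q_2}$ precisely because $(u_0,u_1)$ is smooth), and only then pass to general $(u_0,u_1)\in\cH^s$ by a diagonal argument over smooth approximations of the background. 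Your step (iii) diagonalizes only over the frequency parameter $N$ (to push the blow‑up times into $[0,\epsilon]$), not over the regularization of $(u_0,u_1)$, so the reduction to rough data is missing.

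Two secondary points. The claimed inflation factor $N^{\theta}$ for the $H^s$ norm of $a_N(t_N,\cdot)$ is too optimistic: the Gronwall bound for the error carries a factor exponential in (a power of) the number of ODE oscillations $\lambda=t_N A_N^{(p-1)/2}$, so one can only afford $\lambda$ of size $(\log N)^{\delta_2}$; the inflation is then logarithmic, via Lemma \ref{lem:bt}, and this still suffices because the initial perturbation can be taken of size $A_N N^{s-3/2}=(\log N)^{-\delta_1}$ with $\delta_1<s\delta_2$. Finally, the modulation $e^{iN\omega\cdot x}$ is unnecessary and harmful: the ODE profile $V\big(t(\text{amplitude})^{\fr{p-1}{2}}\big)$ degenerates at zeros of the amplitude (this is why the paper needs the careful choice of bump in Lemma \ref{lem:bump} when $p<5$), and an oscillating amplitude introduces many such zeros; a plain positive bump is what the argument actually requires.
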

The proof of Theorem \ref{cor:1} is just a combination of diagonal argument with the following proposition \ref{thm:main}, so we omit it here.
\begin{prop}\label{thm:main}
    Let $s\in(0,\fr{3}{2}-\fr{2}{p-1})$ fixed and $(u_0,u_1)\in C^{\infty}_c\times C^{\infty}_c(M)$ arbitrarily  given and $u(t)$ be its corresponding smooth solution, then for any $\epsilon>0$, there exist $\delta>0$ and a sequence $\big(u_n(t)\big)^{\infty}_{n=1}$ of $C^{\infty}_{c}(M)$ functions such that
    \begin{equation*}
        \begin{cases}
            &(\ptd^2_t-\Delta)u_n+|u_n|^{p-1}u_n=0\\
            &\big(u_n(0),\ptd_tu_n(0)\big)=(u_{0n},u_{1n})
        \end{cases}
    \end{equation*}
    with
    \[
        \|(u_{0n},u_{1n})-(u_0,u_1)\|_{\cH^s}\rightarrow_{n\rightarrow+\infty}0,
    \]
    but
    \[
        \|u_n-u\|_{L^{\infty}\big([0,\epsilon],\cH^s\big)}\rightarrow_{n\rightarrow+\infty}+\infty.
    \]
    In particular, we have that
    \[
        \|u_n\|_{L^{\infty}([0,\epsilon],\cH^s)}\rightarrow_{n\rightarrow\infty}\infty.
    \]
\end{prop}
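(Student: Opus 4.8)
The plan is to exploit the supercriticality $s<\fr{3}{2}-\fr{2}{p-1}$ by superposing on the fixed smooth solution $u$ a high-frequency, large-amplitude bump whose $\cH^s$-norm is small at time zero but which is driven into norm inflation by the nonlinearity on a short time scale. Concretely, for a large parameter $n$ (eventually a frequency $\la_n\to\infty$) and a small spatial scale, I would choose a perturbation of the form $\phi_n(x)=\kp_n\,\psi\big(\la_n(x-x_0)\big)$ supported in a tiny coordinate ball inside $M=\bT^3$, with $\psi\in C_c^\infty(\bR^3)$ fixed. The amplitude $\kp_n$ and frequency $\la_n$ are coupled so that $\|\phi_n\|_{\cH^s}=\kp_n\la_n^{s-3/2}\to 0$ (this is where $s<3/2$, and more precisely $s<\fr32-\fr2{p-1}$, buys room), while the nonlinear self-interaction term $|\phi_n|^{p-1}\phi_n$ has size $\kp_n^{p}$, large compared with the Laplacian term $\la_n^2\kp_n$ on the relevant time scale. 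I take $u_{0n}=u_0+\phi_n$, $u_{1n}=u_1$ (or add a similarly scaled $\psi$ in the velocity slot); since $u_0,u_1,\phi_n\in C_c^\infty$, the data are admissible and converge to $(u_0,u_1)$ in $\cH^s$.

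Next I would set up the ODE comparison, following \cite{BT07a}. Writing $u_n=u+w_n$, the difference $w_n$ solves a wave equation with forcing $-\big(|u+w_n|^{p-1}(u+w_n)-|u|^{p-1}u\big)$ and data $(\phi_n,0)$. On the short time interval and small spatial ball under consideration, the dispersive part $(\ptd_t^2-\D)w_n$ is negligible relative to the reaction term, so $w_n$ is well approximated by the solution $a_n(t,x)$ of the pointwise ODE $\ddot a + |a|^{p-1}a = 0$ with $a(0,x)=\phi_n(x)$, $\dot a(0,x)=0$ — equivalently, by a rescaling of a single profile ODE. This ODE conserves the energy $\tfrac12\dot a^2+\tfrac1{p+1}|a|^{p+1}$, so its solution oscillates with period $\sim \kp_n^{-(p-1)/2}$ in $t$ and amplitude $\sim\kp_n$; sampling $a_n$ at a time $t_n$ where $\ptd_t a_n$ is of order $\kp_n$ produces an $\cH^s$-contribution to $\ptd_t u_n$ (via the $H^{s-1}$ norm of $\ptd_t a_n$, which carries a spatial-frequency content of order $\la_n$) of size $\sim \kp_n\la_n^{s-1}\cdot(\text{volume factor})$, and by choosing the scaling relation between $\kp_n$ and $\la_n$ appropriately this tends to $+\infty$ while $t_n\to0$ (so that $t_n<\epsilon$ for $n$ large). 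One then fixes $\delta$ to be any number exceeding $\limsup t_n$ — in fact $\delta$ can be taken arbitrarily small — and the desired blow-up $\|u_n-u\|_{L^\infty([0,\epsilon],\cH^s)}\to\infty$, hence $\|u_n\|_{L^\infty([0,\epsilon],\cH^s)}\to\infty$, follows.

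The step I expect to be the main obstacle is the rigorous justification of the ODE approximation, i.e. controlling the error $w_n-a_n$ in a norm strong enough to conclude, over a time interval long enough to reach $t_n$. The naive energy estimate for the error equation loses because the linearized reaction coefficient $p|a_n|^{p-1}$ is of size $\kp_n^{p-1}$, which is \emph{large}, so Gronwall gives a factor $\exp\big(\kp_n^{(p-1)/2} t\big)$ — exactly borderline on the oscillation time scale $t\sim\kp_n^{-(p-1)/2}$. Handling this requires either a careful bootstrap that tracks the error in a weighted/rescaled norm (rescaling $t\mapsto \kp_n^{(p-1)/2}t$ and $x\mapsto\la_n x$ to turn the problem into a perturbation of the profile ODE with a small parameter in front of the Laplacian, à la \cite{CCT03} small-dispersion analysis), or the Burq–Tzvetkov device of working in $L^\infty$-based spaces and using finite propagation speed to keep the perturbation localized in the small ball, so that the spatial derivatives really do cost only powers of $\la_n$ that are dominated by the amplitude gain. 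A secondary technical point is bookkeeping the interaction between the background $u$ (which is $O(1)$ and smooth) and the bump: on the small ball $u$ is essentially constant, and the cross terms in $|u+w_n|^{p-1}(u+w_n)-|u|^{p-1}u-|w_n|^{p-1}w_n$ are lower order in $\kp_n$, but this must be made quantitative. Once the approximation is in place, the choice of exponents is a routine optimization, which I would not belabor here.
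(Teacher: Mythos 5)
Your overall strategy --- perturb the smooth data by a concentrated bump $\kappa_n\psi(\lambda_n(x-x_0))$, approximate the perturbed solution by the solution of the spatially-parametrized ODE $\ddot a+|a|^{p-1}a=0$, and control the error by a semiclassical energy/Gronwall estimate --- is exactly the paper's (Proposition \ref{prop:ilpos:gene}, which compares $u_n$ to $v_n(t,x)=\kappa_n n^{q_1}\phi(nx)\,V\big(t(\kappa_n n^{q_1}\phi(nx))^{(p-1)/2}\big)$ with $q_1=\tfrac32-s$). But two essential ingredients are missing.

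First, you have not identified the actual source of the norm inflation. You attribute it to $\partial_t a_n$ being large at spatial frequency $\lambda_n$ (incidentally, $|\partial_t a_n|\sim\kappa_n^{(p+1)/2}$, not $\kappa_n$, by conservation of $\tfrac12\dot a^2+\tfrac1{p+1}|a|^{p+1}$). In the paper the amplitude sits only logarithmically below the critical scaling ($\kappa_n n^{q_1}$ with $\kappa_n=\log(n)^{-\delta_1}$), so at the initial frequency $n$ nothing grows at all; the entire gain comes from \emph{phase decoherence}: the period of the oscillator depends on the local amplitude $\phi_n(x)$, so after $t_n(\kappa_n n^{q_1})^{(p-1)/2}\sim\log(n)^{\delta_2}$ periods the factor $V\big(t_n(\cdots)^{(p-1)/2}\big)$ oscillates in $x$ at frequency much larger than $n$, and Lemma \ref{lem:bt} converts this into $\|v_n(t_n)\|_{H^s}\gtrsim\lambda^s$ with $\lambda=t_n(\kappa_n n^{q_1})^{(p-1)/2}$. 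Your alternative mechanism (a polynomial gain in $\|\partial_t a_n(t_n)\|_{H^{s-1}}$ after a quarter period) does have a favourable exponent count for supercritical $s$, but it cannot be closed against the available error bounds: the semiclassical energy only yields $\|\partial_t(u_n-v_n)\|_{L^2}\lesssim n^{q_2-\epsilon}$, and for $p=3$ this is $n^{1-s}$, polynomially larger than the purported gain $n^{1/2-s}$; there is no $H^{s-1}$ (negative-index) bound on the difference to interpolate with.

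Second, the Gronwall difficulty you flag as the main obstacle is indeed the crux, and you leave it open. The paper's resolution is to run the approximation for only poly-logarithmically many periods, $t_n=\log(n)^{\delta_2}(\kappa_n n^{q_1})^{-(p-1)/2}$, so that the Gronwall factor is $e^{\log(n)^{3\delta_2}}=n^{o(1)}$ and is absorbed by the polynomial smallness $n^{s-(3/2-2/(p-1))}$ of the source term $\Delta v_n$; the price is that the inflation is only logarithmic, $\log(n)^{s\delta_2-\delta_1}$, which forces the choices $\delta_1<s\delta_2$ and $\delta_2$ small, together with the careful bump selection of Lemma \ref{lem:bump}. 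Without this balance of logarithms and the decoherence lemma, the argument as you outline it does not go through.
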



    Proposition \ref{thm:main} and Theorem \ref{cor:1} say that the equation \eqref{eqn:p} is ill-posed in the space $L^{\infty}([0,\epsilon],\cH^s)$ for any fixed $\epsilon>0$. Actually, for the equation \eqref{eqn:p}, there exists initial data such that the $H^s$-energy blows up instantaneously, as is shown by the following construction:
\begin{thm}
    Let us fix $s\in(0,\fr{3}{2}-\fr{2}{p-1})$, then for any compactly supported data $(u_0,u_1)\in C^{\infty}_c\times C^{\infty}_c$, there exists an initial data $(f_0,f_1)\in \cH^s\backslash C^{\infty}_c$ arbitrarily close to $(u_0,u_1)$ in $\cH^s$, such that the equation \eqref{eqn:p}, complemented by the initial condition $\big(u(0),\partial_tu(0)\big)=(f_0,f_1)$ and satisfying in addition the finite speed of propagation, has no solution in $L^{\infty}([0,T],\cH^s),\ T>0$.
\end{thm}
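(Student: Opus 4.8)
The plan is to build the ``bad'' data $(f_0,f_1)$ by superposing infinitely many rescaled bumps that are spatially separated, and to exploit finite speed of propagation together with Proposition \ref{thm:main} to force the $\cH^s$-norm of any putative solution to be infinite on every time interval $[0,T]$. Concretely, fix $(u_0,u_1)\in C^\infty_c\times C^\infty_c$ and choose a sequence of points $x_k\in\bT^3$ together with a decreasing sequence of radii $r_k\downarrow 0$ so that the balls $B(x_k,2r_k)$ are pairwise disjoint. On $B(x_k,r_k)$ we place a copy of the concentrating data produced by the construction underlying Proposition \ref{thm:main} (i.e.\ the profile that drives the $\cH^s$-energy to infinity in time $\epsilon_k\to 0$), rescaled to live at scale $r_k$ and with amplitude $a_k$ chosen small enough that the $\cH^s$-mass of the $k$-th bump is summable, so that the infinite sum $(f_0,f_1)=(u_0,u_1)+\sum_k(\phi_{0,k},\phi_{1,k})$ converges in $\cH^s$ and can be taken arbitrarily close to $(u_0,u_1)$; genericity of the bad set is not claimed in this statement, only closeness.

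Next I would invoke finite speed of propagation: any solution $u\in L^\infty([0,T],\cH^s)$ satisfying the finite-speed property must, on the backward light cone over $B(x_k,r_k-t)$, coincide with the solution issued from the data restricted to $B(x_k,r_k)$, up to the contribution of the other (disjointly supported) bumps and of $(u_0,u_1)$, which remain localized away from $x_k$ for a short time. Thus for $t\le c\,r_k$ the restriction of $u$ to $B(x_k,r_k-t)$ is, after undoing the rescaling, a genuine solution of \eqref{eqn:p} with the $k$-th profile as data. By the quantitative blow-up in Proposition \ref{thm:main} (applied at unit scale and then rescaled, tracking how $\|\cdot\|_{\cH^s}$ transforms under the parabolic-type scaling of the wave equation with power nonlinearity), this forces $\sup_{t\le T}\|u(t)\|_{\cH^s(B(x_k,r_k-t))}\ge \sup_{t\le \epsilon_k}\|u_n^{(k)}(t)\|_{\cH^s}$ to be as large as we like once $r_k$ is small enough that $c\,r_k$ exceeds the (rescaled) time at which the $k$-th profile has already grown past level $k$. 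Since the balls are disjoint, these local contributions add up, so $\|u(t)\|_{\cH^s(\bT^3)}=\infty$ for every $t\in(0,T]$ and every $T>0$ — no such solution can exist.

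The main obstacle is the bookkeeping in the rescaling step: one must check that the scales $r_k$, amplitudes $a_k$, and the blow-up times $\epsilon_k$ from Proposition \ref{thm:main} can be chosen \emph{simultaneously} so that (i) the data sum converges in $\cH^s$ and is close to $(u_0,u_1)$, (ii) each rescaled profile still blows up within the time window $[0,c\,r_k]$ allowed by finite speed of propagation, and (iii) the blow-up amplitudes, after the $\cH^s$-scaling factor $r_k^{\frac32-s}$ (or its analogue, depending on how amplitude and scale are coupled in the ODE ansatz of \cite{BT07a}), still tend to infinity. The condition $s<\frac32-\frac2{p-1}$ is exactly what makes this possible: it leaves enough room between the scaling exponent of the linear energy and the growth rate of the nonlinear ODE blow-up to absorb a summable amplitude and a vanishing time window while still sending the local $\cH^s$-norm to infinity. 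A secondary technical point is handling the interaction terms $|u|^{p-1}u$ coming from the cross-contributions of different bumps and of $(u_0,u_1)$; these are controlled because the supports are separated and, on the short time scale $c\,r_k$, finite speed of propagation keeps them from reaching $B(x_k,r_k-t)$, so the local problem is genuinely decoupled.
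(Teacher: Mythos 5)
Your construction is essentially the paper's: superpose disjointly supported concentrating profiles (the $\psi_{n_k}$ of Proposition \ref{prop:ilpos:gene}) at points accumulating so that the sum is small in $\cH^s$, use finite speed of propagation to decouple the evolution over each bump from the background and from the other bumps, and invoke the quantitative single-bump blow-up to get a large local $H^s$ norm on each (shrunken) support $K_{n_k}$.

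One step of your write-up is wrong as stated, though it is easily repaired and does not affect the theorem. You conclude that ``these local contributions add up, so $\|u(t)\|_{\cH^s}=\infty$ for every $t\in(0,T]$.'' The construction does not give this: the $k$-th bump is only controlled (via Lemma \ref{lem:diff:clos}) on the time interval $[0,t_{n_k}]$, and at a \emph{fixed} $t>0$ all but finitely many bumps have $t_{n_k}<t$, so you have no lower bound on their contribution at time $t$; moreover each contribution at its own designated time is large but finite ($\sim\log(n_k)^{\alpha}$), not infinite. What the argument actually yields, and what the paper states, is
\[
\limsup_{k\to\infty}\|u_{\infty}(t_{n_k},\cdot)\|_{H^s}=+\infty
\]
along the sequence $t_{n_k}\downarrow 0$, which already contradicts $u\in L^{\infty}([0,T],\cH^s)$ for every $T>0$ since $t_{n_k}\in[0,T]$ for $k$ large. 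You should also record, as the paper does, why $(f_0,f_1)\notin C^{\infty}_c$ (e.g.\ the sup-norm of the summed bumps is infinite), since the statement asserts $(f_0,f_1)\in\cH^s\backslash C^{\infty}_c$.
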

 
\begin{rem}
  This theorem states that the set of datum initiated at which the equation \eqref{eqn:p} is ill-posed in $L^{\infty}([0,T],\cH^s),\ T>0$ is dense in $\cH^s$. Indeed what we expect is that it (at least) has a $G_{\delta}$-structure, but due to technical difficulties, we cannot prove this now.
\end{rem}

\begin{rem}\label{rem:1}
    By combining the Theorem \ref{thm:pro}, we obtain the following 'almost-sure non-continuous dependence on the initial data':
    Let $\frac{2(p-3)}{p-1}<s<\fr{3}{2}-\fr{2}{p-1}$ for $3\leq p<5$ and any $(u_0,u_1)\in\cH^s(\bT^3)$ be given. For any given $\varepsilon>0$, there exist a probability measure $\mu$ depending on $(u_0,u_1)$, which is supported on $\cH^s(\bT^3)$ and is of positive probability on any non-empty open subset of $\cH^s(\bT^3)$, and a subset $\Sigma\subset\cH^s(\bT^3)$ of full $\mu$-measure such that for any $(v_0,v_1)\in\Si$ with its corresponding solution $v(t)$, there exists a sequence $\big(v_n(t)\big)^{\infty}_{n=1}$ of $C^{\infty}(\bT^3)$ functions such that
    \begin{equation*}
        \begin{cases}
            &(\ptd^2_t-\Delta)v_n+|v_n|^{p-1}v_n=0\\
            &\big(v_n(0),\ptd_tv_n(0)\big)=\big(v_{0n},v_{1n}\big)
        \end{cases}
    \end{equation*}
    with
    \[
        \|(v_{0n},v_{1n})-(v_0,v_1)\|_{\cH^s(\bT^3)}\rightarrow_{n\rightarrow+\infty}0,
    \]
    but
    \[
        \|v_n(t)-\Phi(t)(v_{0n},v_{1n})\|_{L^{\infty}([0,\varepsilon]H^s(\bT^3))}\rightarrow_{n\rightarrow+\infty}+\infty.
    \]

    We should note that this result does not contradict the probabilistic continuity on the initial datum stated in Theorem \ref{thm:pro}, which says that the set of the datum, initiated at which the equation \eqref{eqn.proba.main} does continuously depend on the data, is of $\mu$-probability $1$, and which does not exclude the possibility that there may exist discontinuity on the data. This remark convinces us that there actually not only exists discontinuity on the initial data, but also {the event}, consisting of the data initiated at which such discontinuity occurs, is of $\mu$-probability $1$.
\end{rem}
	We also have a deterministic analogue to results as in Remark \ref{rem:1}.
\begin{rem}
	Let $\frac{2(p-3)}{p-1}<s<\fr{3}{2}-\fr{2}{p-1}$ for $3\leq p<5$ and any $(v_0,v_1)\in\cH^s(\bT^3)$ be given. For any given $\varepsilon>0$, there exists a sequence $\big(v_n(t)\big)^{\infty}_{n=1}$ of $C^{\infty}(\bT^3)$ functions such that
    \begin{equation*}
        \begin{cases}
            &(\ptd^2_t-\Delta)v_n+|v_n|^{p-1}v_n=0\\
            &\big(v_n(0),\ptd_tv_n(0)\big)=\big(v_{0n},v_{1n}\big)
        \end{cases}
    \end{equation*}
    with
    \[
        \|(v_{0n},v_{1n})-(v_0,v_1)\|_{\cH^s(\bT^3)}\rightarrow_{n\rightarrow+\infty}0,
    \]
    but
    \[
        \|v_n(t)\|_{H^s(\bT^3)}\rightarrow_{n\rightarrow+\infty}+\infty.
    \]
\end{rem}

The article proceeds as follows: in Section \ref{sec:series} we present the similar results as above for the non-evolutionary case, and then in Section \ref{sec:lwe} generalize these results to the linear wave equation. For the nonlinear case, in Section \ref{sec:cnwe}, we summarize the routine in which Burq-Tzvetkov proved the ill-posedness result for cubic wave equation, and then by a similar argument, we prove the ill-posedness result for fixed time interval for equation \eqref{eqn:p} with generic data in Section \ref{sec:pnlw}, and then in section \ref{sec:inst}, we prove the set of datum initiated at which the equation becomes ill-posed instantaneously is dense in $\cH^s$.

\section*{Acknowledgement} I should thank Nicolas Burq for his encouraging and carefully advising when I was working on this problem.

\section{The series case}\label{sec:series}
We begin our discussion with recalling some results on the randomization of the initial data used in \cite{BT07a} and references therein. These typical results, especially Proposition \ref{prop:probnottotopo}, serves as the prototype in our consideration in the PDE case.

     Suppose the function $u$ on the torus $\bT^3$ is given by its Fourier series
\[
    u(x)=\sum_nu_ne^{in\cdot x}.
\]
We can randomize it via
    \begin{eqnarray}\label{randomization}
    u^{\omega}(x)=\sum_nu_n\alpha_n(\omega)e^{in\cdot x},
    \end{eqnarray}
where $\big(\alpha_n(\omega)\big)_n$ is a series of i.i.d real-valued standard Gaussian random variable on the probability space $(\Omega,\cA,\bP)$. Now we have the following statement.

\begin{thm}\cite{BT07a}\cite{paley1930some}\label{l2largeprobability}
    If $u\in L^2(\bT^3)$, then almost surely $u^{\omega}\in L^q(\bT^3)$ for any $q>2$.
  \end{thm}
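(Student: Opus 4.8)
The plan is to reduce the statement to a single moment bound coming from Khinchin's inequality, combined with Fubini's theorem; this is the classical Paley--Zygmund argument. First I would record the version of Khinchin's inequality needed here: if $(\a_n)$ are i.i.d.\ standard real Gaussians and $(c_n)$ is a finitely supported sequence of complex numbers, then
\[
    \Big\| \sum_n c_n \a_n \Big\|_{L^q(\Om)} \leq C\sqrt{q}\,\Big( \sum_n |c_n|^2 \Big)^{1/2}, \qquad q \geq 2,
\]
with $C$ an absolute constant. For Gaussians this is elementary: $\sum_n c_n\a_n$ (equivalently, its real and imaginary parts) is a centered Gaussian with variance controlled by $\sum_n|c_n|^2$, and the $L^q(\Om)$-norm of a centered Gaussian equals a constant $c_q\leq C\sqrt q$ times its standard deviation, by the explicit formula for Gaussian moments.

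Next I would fix $q>2$ and apply this pointwise in $x\in\bT^3$ with $c_n=u_ne^{in\cdot x}$, so that $|c_n|=|u_n|$ and $\sum_n|c_n|^2=\sum_n|u_n|^2=:A^2$, which is finite by Parseval since $u\in L^2(\bT^3)$. This yields, for every $x$,
\[
    \bE\,\big| u^\om(x) \big|^q \leq \big( C\sqrt{q}\, A \big)^q .
\]
To make this rigorous I would first work with the truncations $u^\om_N(x)=\sum_{|n|\leq N}u_n\a_n(\om)e^{in\cdot x}$, for which the above bound holds with a constant independent of $N$; since $\sum_n|u_n|^2<\infty$ we have $u^\om_N\to u^\om$ in $L^2(\Om\times\bT^3)$, and passing to an a.e.\ convergent subsequence and invoking Fatou transfers the bound to $u^\om$ itself.

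Then I would integrate in $x$ and apply Fubini (the integrand being nonnegative):
\[
    \bE\int_{\bT^3}|u^\om(x)|^q\,dx = \int_{\bT^3}\bE\,|u^\om(x)|^q\,dx \leq |\bT^3|\,\big( C\sqrt{q}\, A \big)^q < \infty .
\]
Thus the nonnegative random variable $\|u^\om\|_{L^q(\bT^3)}^q$ has finite expectation, hence is finite almost surely, i.e.\ $u^\om\in L^q(\bT^3)$ with probability one. Finally, to get the conclusion for \emph{every} $q>2$ simultaneously, I would apply this to a sequence $q_k\uparrow\infty$ and intersect the resulting full-measure events; since $\bT^3$ has finite measure, $L^{q_k}(\bT^3)\subset L^q(\bT^3)$ whenever $q\leq q_k$, so on this intersection $u^\om\in L^q(\bT^3)$ for all $q>2$.

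The argument is essentially routine and I do not expect a genuine analytic obstacle. The only point that deserves a little care is the passage from the finite truncations to the full random series --- checking that $u^\om$ really defines an element of $\bigcap_{2\leq q<\infty}L^q(\Om\times\bT^3)$ and that the Khinchin bound survives the limit --- which is handled by the uniformity in $N$ of the constant together with Fatou's lemma.
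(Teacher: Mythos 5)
Your proof is correct and is essentially the same moment-method argument as the paper's: both reduce the statement to the bound $\bE\|u^{\omega}\|_{L^q}^q\leq C_q\|u\|_{L^2}^q$ obtained from Gaussian moment estimates, followed by Fubini and Chebyshev/finiteness of the expectation. The only cosmetic difference is that you invoke the exact $L^q$-norm of a Gaussian (Khinchin), which covers all $q\geq 2$ at once, whereas the paper expands the $2k$-th power multinomially for even exponents and interpolates for the rest; you are also more explicit than the paper about the passage from finite truncations to the full series and about obtaining the conclusion for all $q>2$ simultaneously.
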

  For readers' convenience, and also for the self-containing of this article, we present the proof of this theorem. One also can refer to \cite{BT07a}\cite{paley1930some}.
  
\begin{proof}
    We first consider the case $q=2k$ for some positive integer. Now we can calculate the expectation
    \begin{eqnarray*}
        \bE\Big(\|u^{\omega}\|^q_{L^q_x}\Big) &=&\int_{\Omega}\|u^{\omega}\|^q_{L^q}d\bP\\
                                              &=&\int_{\Omega}\int_{\bT^3}\sum_{2\beta_1+\cdots+2\beta_i=2k}\frac{q!}{(2\beta_1)!\cdots(2\beta_i)!}\Pi(u_{n_i}\alpha_{n_i})^{2\beta_i}dxd\bP\\
                                              &\leq&\int_{\bT^3}\sum_{\beta_1\cdots+\beta_i=k}\frac{(2k)!}{(2\beta_1)!\cdots(2\beta_i)!}\bE\Big(\Pi(u_{n_i}\alpha_{n_i})^{2\beta_i}\Big)dx\\
                                              &\leq& C\Big(\sum|u_n|^2\Big)^k = C \|u\|^{2k}_{L^2}.
    \end{eqnarray*}
    By H\"older inequality, we have
    \[
        \bE\Big(\|u^{\omega}\|_{L^q}\Big)\leq \bE\Big(\|u^{\omega}\|^q_{L^q}\Big)^{1/q}\leq c\|u\|_{L^2}.
    \]
    Next for the case $2k<q<2k+2$, by the interpolation of $L^p$-spaces and H\"older inequality with $\theta_1+\theta_2=1$ and $\fr{\theta_1}{2k}+\fr{\theta_2}{2k+2}=\fr{1}{q}$, we have
    \begin{eqnarray*}
        \bE\Big(\|u^{\omega}\|_{L^q}\Big)&\leq&\bE\Big(\|u^{\omega}\|^{\theta_1}_{L^{2k}}\|u^{\omega}\|^{\theta_2}_{L^{2k+2}}\Big)\\
                                         &\leq&\Big[\bE\Big(\|u^{\omega}\|_{L^{2k}}\Big)\Big]^{\theta_1}\Big[\bE\Big(\|u^{\omega}\|_{L^{2k+2}}\Big)\Big]^{\theta_2}\\
                                         &\leq& c\|u\|_{L^2},
    \end{eqnarray*}
    which completes the proof by using Markov-Chebychev's inequality.
\end{proof}

As a corollary of the the proof of Theorem \ref{l2largeprobability}, we have
\begin{cor}
    If $(u_n)\in l^2$, then for any $q>2$, almost surely $u^{\omega}\equiv \sum u_n\alpha_n(\omega)e^{in\cdot x}\in L^q_x.$
\end{cor}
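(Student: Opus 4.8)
The plan is to recognize that this corollary is immediate from the \emph{proof} of Theorem \ref{l2largeprobability}, not merely from its statement. First I would set $u(x)=\sum_nu_ne^{in\cdot x}$ and observe, via Parseval's identity, that $(u_n)\in\ell^2$ is equivalent to $u\in L^2(\bT^3)$; hence the series $u^{\om}$ appearing in the corollary is precisely the randomization \eqref{randomization} of a genuine $L^2$ function, and Theorem \ref{l2largeprobability} applies verbatim to conclude $u^{\om}\in L^q_x$ almost surely for each $q>2$.

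To make explicit why nothing more is needed, I would re-inspect the moment computation in the proof of Theorem \ref{l2largeprobability}: the only role of the hypothesis $u\in L^2$ there was to produce the bound $\bE\big(\|u^{\om}\|^{2k}_{L^{2k}_x}\big)\leq C\big(\sum_n|u_n|^2\big)^k$, obtained from the Wick-type combinatorics together with the independence of the standard Gaussians $\alpha_n$, and this bound depends on the coefficients only through $\sum_n|u_n|^2<\infty$. The same interpolation between $L^{2k}$ and $L^{2k+2}$ combined with H\"older's inequality then gives $\bE\big(\|u^{\om}\|_{L^q_x}\big)\leq c\,\big(\sum_n|u_n|^2\big)^{1/2}<\infty$ for every $q>2$, and Markov--Chebyshev's inequality finishes the argument.

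There is essentially no genuine obstacle here; the one point deserving a word is that the random Fourier series defining $u^{\om}$ actually converges in $L^q_x$ almost surely (rather than merely having an a.s.\ finite formal $L^q$ norm). This follows by applying the same moment bound to the tails $\sum_{|n|>N}u_n\alpha_n(\om)e^{in\cdot x}$, so that the partial sums form a Cauchy sequence in $L^q(\Om\times\bT^3)$, together with a Borel--Cantelli argument. Finally, choosing a countable sequence $q_j\uparrow\infty$ and using the inclusion $L^{q_j}(\bT^3)\hookrightarrow L^q(\bT^3)$ for $q\leq q_j$, one may even upgrade the conclusion to: almost surely $u^{\om}\in L^q_x$ for \emph{all} $q>2$ simultaneously.
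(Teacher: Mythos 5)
Your proposal is correct and is exactly what the paper intends: the corollary is stated "as a corollary of the proof" of Theorem \ref{l2largeprobability}, and since by Parseval $(u_n)\in\ell^2$ is equivalent to $u\in L^2(\bT^3)$, the moment bound $\bE\big(\|u^{\om}\|^{2k}_{L^{2k}}\big)\leq C\big(\sum_n|u_n|^2\big)^k$ and the subsequent interpolation/Chebyshev steps apply verbatim. Your extra observations on the a.s.\ convergence of the partial sums and the simultaneous conclusion for all $q>2$ go slightly beyond what the paper records, but they are correct and harmless.
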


It is well known in the subject of real analysis that the Cantor set is an open set with no interior points, but it still has measure $1$. A little bit contrary to this, even though the set $(u^{\omega})$ is contained in $L^q$ with a large probability, there is still a large part of elements which do not belong to $L^q$, as is shown

\begin{prop}\footnote{  Actually, this is an exercise, left by N. Burq when he gave the M2 class 'Super-critical nonlinear wave equations' in the winter of the year 2012.}\label{prop:probnottotopo}
    For any $p>2$, there exists a dense $G_{\delta}$ type subset $G\subset L^2$ such that for any $v=\sum_nv_ne^{in\cdot x}\in G$, the series $v=\sum_nv_ne^{in\cdot x}$ is NOT lying in the space $L^p$.
  \end{prop}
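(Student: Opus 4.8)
The plan is to realize the set $G$ as a countable intersection of open dense sets, so that the Baire category theorem guarantees $G$ is itself a dense $G_\delta$. The natural choice is
\[
    G = \bigcap_{m=1}^{\infty} U_m, \qquad U_m := \Big\{ v=\textstyle\sum_n v_n e^{in\cdot x}\in L^2 \colon \big\| \textstyle\sum_{|n|\leq N} v_n e^{in\cdot x}\big\|_{L^p} > m \text{ for some } N=N(v)\Big\}.
\]
Each $v$ lying in every $U_m$ has partial sums unbounded in $L^p$; since for $v\in L^p$ the partial (Fourier) sums would have to be $L^p$-bounded (indeed convergent, as $p>2$ and the torus is compact so one can invoke $L^p$-boundedness of the partial-sum operators, or more simply Fatou applied to a weakly convergent subsequence), this forces $v\notin L^p$. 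So it suffices to show each $U_m$ is open and dense in $L^2$.

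Openness is the easy step: the map $v\mapsto \big\|\sum_{|n|\leq N} v_n e^{in\cdot x}\big\|_{L^p}$ is continuous on $L^2$ for each fixed $N$ (it factors through the finite-dimensional truncation, on which all norms are comparable), so $U_m$ is a union over $N$ of open sets, hence open. For density, I would fix an arbitrary $w\in L^2$ and $\eta>0$ and exhibit $v\in U_m$ with $\|v-w\|_{L^2}<\eta$. Here the randomization \eqref{randomization} enters as a tool rather than as the object of study: by Theorem \ref{l2largeprobability} (and its corollary) the randomized series $g^\omega := \sum_n \beta_n \alpha_n(\omega) e^{in\cdot x}$ lies in $L^p$ almost surely whenever $(\beta_n)\in\ell^2$. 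Alternatively, and more directly, I would build an explicit lacunary-type perturbation: choose $v = w_{\leq K} + \lambda h$, where $w_{\leq K}$ is a high-frequency-truncated approximation of $w$ with $\|w-w_{\leq K}\|_{L^2}<\eta/2$, and $h$ is a normalized trigonometric polynomial supported on frequencies far above $K$, chosen so that $\|h\|_{L^p}/\|h\|_{L^2}$ is as large as we like — this ratio is unbounded over trigonometric polynomials precisely because $L^p\not\subset L^\infty$-type concentration is available (e.g. a Dirichlet-kernel-like bump concentrated near a point). Taking $\|h\|_{L^2}$ small (so $\lambda\|h\|_{L^2}<\eta/2$) while $\lambda\|h\|_{L^p}>m + \|w_{\leq K}\|_{L^p}$ puts a full partial sum of $v$ above level $m$ in $L^p$, so $v\in U_m$, and $\|v-w\|_{L^2}<\eta$.

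The main obstacle is the density step, specifically producing perturbations that are simultaneously $L^2$-small and have a partial sum of large $L^p$-norm, while keeping the frequency supports disjoint from the bulk of $w$ so that no cancellation spoils the estimate $\|v_{\leq N}\|_{L^p}\geq \lambda\|h\|_{L^p} - \|w_{\leq K}\|_{L^p}$. The disjoint-support requirement is what makes a truncation $w_{\leq K}$ of $w$ necessary and is the one place care is needed; everything else (openness, the Baire argument, and the deduction $G\cap L^p=\emptyset$ from membership in all $U_m$) is routine. One should also double-check the claim that unbounded $L^p$ partial sums genuinely preclude $v\in L^p$: for $1<p<\infty$ on $\bT^3$ this follows from $L^p$-boundedness of the square-partial-sum (or ball-multiplier-free spherical-sum in one variable at a time) operators, but since we are free to choose the shape of the truncation $\sum_{|n|\leq N}$ we may instead use rectangular partial sums, for which $L^p$-boundedness, $1<p<\infty$, is classical, so the deduction is safe.
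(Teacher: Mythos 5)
Your proof is correct and follows essentially the same route as the paper's: a Baire-category argument with open dense sets defined by largeness of the $L^p$ norm of frequency truncations of $v$, with density obtained by adding $L^2$-small but $L^p$-large concentrated perturbations, and the conclusion $v\notin L^p$ drawn from the $L^p$-boundedness of the partial-sum projections. The only (harmless) difference is that you insert a single high-frequency trigonometric bump per set $U_m$, whereas the paper superposes infinitely many disjointly supported physical-space bumps $\ep_p w_{k_p}(\cdot-x_p)$ at once; your variant is if anything cleaner, since the perturbation is exactly captured by a partial sum.
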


\begin{proof}
\begin{itemize}
    \item{Step 1.}
    Now given $u=\sum_nu_ne^{in\cdot x}\in L^2$, for any $\epsilon>0$, we can choose $N$ large enough such that
    \[
        \|u-v_0\|_{L^2}<\frac{\epsilon}{2}.
    \]
    Denote $\chi\in C^{\infty}_c(\bR^3)$ such that $\chi$ is radial and equal to 1 when $|x|\leq 1/2$ and is 0 when $|x|>1$. We also denote $\chi(x)=\chi(|x|)$. Set $w_k(x)=k^{d/2}\chi\big(k(x-x_0)\big)$. Then we have
    \[
        \|w_k\|_{L^p}\sim k^{d(\frac{1}{2}-\frac{1}{q})},
    \]
    and $\textrm{supp}w_k\subset[x_0-\frac{1}{k},x_0+\frac{1}{k}]$, where we admit that the addition of a number to one vector is just to add to the first component. Therefore the support of a function of the form $w_{k_p}(x-x_p)$ with $x_p=\sum_{i=1}^p2^{-(i+2)}$ and $k_p=2^{p+3}\times 2^{10}$ does not intersect that of any other one of different $p$. Now for $\ep_p=2^{-(\frac{1}{2}-\frac{1}{q})}\sqrt{\ep}$, we set
    \[
        v\equiv v_0+\sum_{p=1}^{\infty}\ep_pw_{k_p}(x-x_p).
    \]
    Then thanks to the disjointness of the supports of functions $w_{k_p}$ for different $k_p$, we have
    \[
        \|u-v\|^2_{L^2}\leq \|u-v_0\|^2_{L^2}+\sum\ep^2_p=C\ep,
    \]
    but
    \[
        \|\sum_{p=1}^{\infty}\ep_pw_{k_p}(x-x_p)\|^q_{L^q}\sim \sum\ep^q_pk_p^{d(\frac{1}{2}-\frac{1}{q})}=\sum\big(2^{4d(\frac{1}{2}-\frac{1}{q})}\ep\big)^q=+\infty.
    \]

    \item{Step 2.} Now we define the set
        \[
            G_M\equiv\{v \in L^2\colon \|\Pi_Mv\|_{L^q}>\log\log M \text{ for } M \text{ large} \}
        \]
    Now define $G\equiv \lim\sup G_M$. By the arbitrary choice of $u$ in Step 1, we can see that $G$ is dense in $L^2$. Now we only need to see the openness of $G_M$ in $L^2$, which is obvious by the continuity of the $L^q$ norm together with the continuity property of the projection onto the low frequencies.
\end{itemize}
\end{proof}

\section{The case of linear wave equation}\label{sec:lwe}
In this section, by using the ideas used in the last section, we are going to present some results similar to Proposition \ref{prop:probnottotopo} for the wave equation
\begin{eqnarray}\label{lwe}
        \left\{
            \begin{split}
            & \big(\pa_t^2-\Delta\big)u =0,\\
            & (u(0),\pa_tu(0))=(u_0, 0), u_0\in L^2.
            \end{split}
        \right.
\end{eqnarray}
It is known that the equation is well-posed in $L^2$, but it is ill-posed in the $L^p$ space, as can be shown by contradicting the continuous dependence on the initial data.

\begin{thm}\label{prop:probnot2topo:lwe}
    For any $p>2$, there exists a $G_\delta$ type set $G$ dense in $L^2$, such that the equation \eqref{lwe} is not well-posed in $C([0,T];L^p)$ for any $T>0$, no matter how small it is.
\end{thm}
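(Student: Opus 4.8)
The plan is to transfer the $G_\delta$-density construction of Proposition \ref{prop:probnottotopo} through the linear wave flow. Recall that the solution of \eqref{lwe} is $u(t,x)=\cos(t|\D|)u_0=:S_0(t)u_0$, which is an isometry on every $\cH^s$ and, crucially, an invertible Fourier multiplier away from the zeros of $\cos(t|n|)$. First I would fix a single time $t_0\in(0,T]$ — say $t_0=T$ — and reduce the whole statement to producing, at that one time, a dense $G_\delta$ set of data whose evolution leaves $L^p$. This suffices: if $\|S_0(t_0)v\|_{L^p}=+\infty$ then certainly $v\notin C([0,T];L^p)$ as a solution, and the failure of continuous dependence follows by approximating by smooth data exactly as in the series case (truncate $v$ in frequency, note the truncations are smooth hence lie in $C([0,T];L^p)$, and $\|S_0(t_0)\Pi_M v\|_{L^p}\to\infty$).

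The core step is the bump-perturbation argument of Step 1 of Proposition \ref{prop:probnottotopo}, run on the ``output'' side. Given $u_0\in L^2$ and $\eps>0$, I would choose a smooth frequency-truncation $v_0$ with $\|u_0-v_0\|_{L^2}<\eps/2$, and then, instead of adding disjointly-supported bumps to $u_0$ directly, add them to $S_0(t_0)u_0$: set $\widetilde w = S_0(t_0)v_0 + \sum_{p\ge 1}\eps_p w_{k_p}(\cdot - x_p)$ with the same $x_p,k_p,\eps_p$ as in that proof, so that $\widetilde w$ is $O(\sqrt\eps)$-close to $S_0(t_0)u_0$ in $L^2$ but has $\|\widetilde w\|_{L^p}=+\infty$ by disjointness of supports. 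Then pull back: $v:=S_0(t_0)^{-1}\widetilde w = \cos(t_0|\D|)^{-1}\widetilde w$. Since $S_0(t_0)$ is an $L^2$-isometry, $\|u_0-v\|_{L^2}=\|S_0(t_0)u_0-\widetilde w\|_{L^2}\le C\sqrt\eps$, and by construction $\|S_0(t_0)v\|_{L^p}=\|\widetilde w\|_{L^p}=+\infty$. This gives a dense set of data whose $L^p$-norm at time $t_0$ is infinite. For the $G_\delta$ part, mimic Step 2 verbatim with the flow inserted: put
\[
    G_M\equiv\big\{\,v\in L^2\colon \|\Pi_M S_0(t_0) v\|_{L^q}>\log\log M\,\big\},
\]
which is open in $L^2$ because $v\mapsto \Pi_M S_0(t_0)v$ is continuous into the finite-dimensional (hence all-norms-equivalent) space of band-limited functions and the $L^q$-norm is continuous there; then $G\equiv\limsup_M G_M$ is $G_\delta$, and it is dense by the perturbation construction just described (each constructed $v$ lands in infinitely many $G_M$). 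Finally one runs over $t_0$: taking a countable dense set of times, or just the single endpoint, is enough for the stated conclusion since ill-posedness at one time already breaks $C([0,T];L^p)$.

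The main obstacle is the invertibility of $\cos(t_0|\D|)$ on the torus: $\cos(t_0|n|)$ vanishes for those $n$ with $t_0|n|\in\tfrac\pi2+\pi\bZ$, so $S_0(t_0)^{-1}$ is not globally defined. I expect to handle this in one of two ways. The clean fix is to choose $t_0$ (within $(0,T]$, using the density of such choices) so that $t_0|n|\notin \tfrac\pi2+\pi\bZ$ for all $n\in\bZ^3$ — possible because $\{|n|:n\in\bZ^3\}$ is countable, so all but countably many $t_0$ avoid the bad set — but then $\cos(t_0|n|)$ still accumulates at $0$, so $S_0(t_0)^{-1}$ is unbounded and one must check the pulled-back data is genuinely in $L^2$; this is where a small argument is needed. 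The more robust route, which I would actually carry out, is to avoid inversion entirely: build the bad output $\widetilde w$ not out of arbitrary bumps but out of finite trigonometric polynomials adapted so that $\cos(t_0|\D|)$ acts invertibly on their (finite) frequency support — concretely, replace each sharp bump $w_{k_p}$ by a high-frequency modulated packet $e^{in_p\cdot x}\phi_p$ with $|n_p|$ chosen large, $\cos(t_0|n_p|)$ bounded below, and $\phi_p$ a fixed smooth bump, so that one may simply define $v=\sum_p \eps_p\,\cos(t_0|n_p|)^{-1} e^{in_p\cdot x}\phi_p(\cdot-x_p)$ directly in $L^2$; the dispersion of the half-wave propagator is mild enough on bounded-frequency packets that the $L^p$-blow-up of $S_0(t_0)v$ is preserved. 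Either way the only real work is this frequency-localization bookkeeping; the topology (openness, density, $G_\delta$) is a direct copy of Proposition \ref{prop:probnottotopo}.
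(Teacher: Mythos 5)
Your strategy diverges from the paper's at a decisive point, and the divergence creates a genuine gap. You insist on producing data whose evolution leaves $L^p$ at one \emph{fixed} time $t_0=T$, and to do so you build the singular object on the output side and pull it back through $\cos(t_0\sqrt{-\Delta})^{-1}$. But $\cos(t_0\sqrt{-\Delta})$ is not an $L^2$-isometry: the multiplier $\cos(t_0|n|)$ is merely bounded by $1$ and vanishes, or becomes arbitrarily small, along infinitely many frequencies. So the identity $\|u_0-v\|_{L^2}=\|S_0(t_0)u_0-\widetilde w\|_{L^2}$ on which your density argument rests is false; generically $v$ is not even in $L^2$, and when it is, its $L^2$-distance to $u_0$ is not controlled by $\sqrt\eps$. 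Your proposed repair --- replacing the concentrating bumps $w_{k_p}$ by modulated packets $e^{in_p\cdot x}\phi_p$ with \emph{fixed} profiles $\phi_p$ so that the multiplier is bounded below on their frequency support --- removes the very mechanism that made the $L^p$-norm diverge: the divergence in Proposition \ref{prop:probnottotopo} comes from spatial concentration ($\|w_k\|_{L^p}\sim k^{d(1/2-1/p)}$), and fixed-profile packets have uniformly bounded $L^p$-norms, so $S_0(t_0)v$ stays in $L^p$. If you reinstate concentration, the Fourier support of each packet spreads over a ball of radius $k_p$ and the small-divisor problem returns.

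The paper sidesteps all of this by never inverting the propagator: it keeps the disjointly supported concentrating bumps in the \emph{initial data}, $v_0=\Pi_{\le N}u_0+\sum_{p=1}^M\eps_p w_{k_p}(\cdot-x_p)$, and evaluates the solution at short times $t_M=2^{-(M+10)}$ chosen so small that finite speed of propagation preserves both the disjointness of the supports and the concentration; hence $\|v(t_M)\|_{L^q}$ still diverges as $M\to\infty$ while $\|v_0-u_0\|_{L^2}\le c\eps$. Ill-posedness in $C([0,T];L^p)$ only requires bad behaviour at \emph{some} times in $[0,T]$, and times tending to $0$ are the cheap choice; your fixation on the endpoint $t_0=T$ is what forces you into the (unresolved) inversion problem. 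The $G_\delta$ bookkeeping in your Step 2 is fine and matches the paper's, but it sits downstream of the density construction, which as written does not close.
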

\begin{rem}
    For the proof, we only list the essential part. And we omit the construction of such a set, which is similar to the series case.
\end{rem}
\begin{proof}
    Let $\ep_p,w_{k_p}$ be the same as in the proof of Proposition \ref{prop:probnottotopo}, and set $v_0\equiv \Pi_{\leq N}u_0+\sum_{p=1}^Mw_{k_p}(x-x_p)$ and hence $\|u_0-v_0\|_{L^2}\leq c\ep$. The essential idea underlying this proof is that the time is so short that it does not destroy the property that the supports of $w_{k_p}$ do not intersect with each other for different $k_p$. Let us consider the Cauchy problem
\begin{eqnarray}\label{lwe}
        \left\{
            \begin{split}
            & \big(\pa_t^2-\Delta\big)v =0,\\
            & (v(0),\pa_tv(0))=(v_0, 0).
            \end{split}
        \right.
\end{eqnarray}
    Now set $t_M=2^{-(M+10)}$, then by the finite speed of propagation of waves, we have
    \[
        \|u(t_M,\cdot)-v(t_M,\cdot)\|_{L^2}\leq \|cos(t\sqrt{-\Delta})\Pi_{>N}u_0\|_{L^2}+\|\sum_{p=1}^Mcos(t\sqrt{-\Delta})\ep_pw_{k_p}(\cdot-x_p)\|_{L^2}\leq
        c\ep,
    \]
    and for any $q>2$
    \[
        \|\sum_{p=1}^Mcos(t\sqrt{-\Delta})\ep_pw_{k_p}(\cdot-x_p)\|^q_{L^q}\sim\sum_1^M(2^{4d(\frac{1}{2}-\frac{1}{q})})^q\ep^{q}.
    \]
    Therefore, as M tends to $\infty$, we have
    \[
        \|u(t_M,\cdot)-v(t_M,\cdot)\|_{L^q}\geq \sum_1^M(2^{4d(\frac{1}{2}-\frac{1}{q})})^q\ep^{q}\rightarrow \infty.
    \]
    which finishes the proof.
\end{proof}

\section{The case of cubic wave equation}\label{sec:cnwe}
    In the following, we are going to show the equation \eqref{eqn:p} is ill-posed for the cubic wave equation with generic initial datum. But first we should recall the ill-posedness result for the cubic wave equation with zero initial data by N. Burq and N. Tzvetkov \cite{BT07a}. Precisely
\begin{prop}\label{illposed:zerodata}
    Let us fix $s\in ]0,1/2[$. Then there exists $\delta>0$ and a sequence $(t_n)$ of positive numbers tending to zero and a sequence $(u_n(t))$ of $C^{\infty}_c(M)$ functions such that
    \[
        (\partial^2_t-\Delta)u_n+u^3_n=0
    \]
    with
    \[
        \|u_n(0)\|_{\cH^s(M)}\leq C\log(n)^{-\delta}\rightarrow_{n\rightarrow\infty}0
    \]
    but
    \[
        \|u_n(t_n)\|_{\cH^s(M)}\geq C\log(n)^{\delta}\rightarrow_{n\rightarrow\infty}\infty.
    \]
\end{prop}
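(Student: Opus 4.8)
The plan is to reproduce the Burq--Tzvetkov norm-inflation mechanism: concentrate the data at a small scale, amplify its amplitude so that the cubic term dominates the Laplacian, and compare the genuine solution with the solution of the spatially decoupled ODE $\ptd_t^2f+f^3=0$ obtained by deleting $\D$. By finite speed of propagation it suffices to build the family on $\bR^3$ with data supported in a tiny ball and then transplant it to $M$ through one coordinate chart and a fixed cut-off; since the data, and the solution on the short time interval we use, are supported in balls of radius $o(1)$, this changes the $\cH^s$ norms only by constants and keeps the transplanted functions in $C^\infty_c(M)$. So I work on $\bR^3$: fix $\vphi\in C^\infty_c(\bR^3)$, $\vphi\not\equiv0$, a concentration scale $\la=\la_n\to\infty$ (a suitable fixed power of $n$) and an amplitude factor $\kp=\kp_n$ (to be taken logarithmically small in $n$), and set
\[
 u_n(0,x)=\kp_n\,\la_n^{\frac32-s}\,\vphi(\la_n x),\qquad \ptd_tu_n(0,x)=0 .
\]
Then $\|u_n(0)\|_{\cH^s}\sim\kp_n$, the amplitude is $A_n:=\kp_n\la_n^{3/2-s}$ and the width is $\la_n^{-1}$, and the regime in which the cubic term dominates the spatial dispersion is $A_n\gg\la_n$, i.e. $\kp_n\gg\la_n^{s-1/2}$, which is compatible with $\kp_n\to0$ \emph{precisely because} $s<\tfrac12$. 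The solution $u_n$ is globally smooth (cubic wave is energy-subcritical in $3$D), so everything reduces to a lower bound for $\|u_n(t_n)\|_{\cH^s}$ at a small time $t_n$.

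For the comparison, let $\Phi$ be the global periodic solution of $\Phi''+\Phi^3=0$, $\Phi(0)=1$, $\Phi'(0)=0$ (with the elementary facts that $\Phi,\Phi'$ are $O(1)$ and its period is $\sim1$), and put $f_n(t,x)=u_n(0,x)\,\Phi\bigl(u_n(0,x)\,t\bigr)$, so that $\ptd_t^2f_n+f_n^3=0$ with the same data. Since the decoupled oscillators at different points have amplitudes proportional to $\vphi(\la_n x)$ they dephase, so $f_n(t)$ acquires spatial frequencies up to $\sim\la_nA_nt$ and $\|f_n(t)\|_{\dot H^s}\sim\kp_n(A_nt)^s$ once $A_nt\gtrsim1$. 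The error $w_n:=u_n-f_n$ satisfies, after linearizing the nonlinearity about $f_n$,
\[
 (\ptd_t^2-\D)w_n+3f_n^2w_n=\D f_n-\bigl(3f_nw_n^2+w_n^3\bigr),\qquad (w_n,\ptd_tw_n)\big|_{t=0}=(0,0).
\]
I would run a bootstrap on this equation using the energy $\tfrac12\|\ptd_tw_n\|_2^2+\tfrac12\|\nabla w_n\|_2^2+\tfrac32\|f_nw_n\|_2^2$, so that the large potential $3f_n^2\sim A_n^2$ feeds a coercive, almost-conserved quantity rather than a Gronwall exponent; the residual growth is then only $e^{O(A_nt)}$ (from $\ptd_tf_n/f_n\sim A_n$ and the quadratic/cubic remainders) plus the source contribution $\lesm\int_0^t\|\D f_n\|_{H^{s-1}}$. (Equivalently, one may first rescale to the small-dispersion equation $\ptd_t^2v-\eps^2\D v+v^3=0$, where everything is $O(1)$ and the approximation is classical on rescaled times $\lesm|\log\eps|$, and then undo the scaling.)

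It remains to calibrate: take $\la_n$ a fixed positive power of $n$, $\kp_n=(\log n)^{-\delta}$ and $t_n=(\log n)^{\be}/A_n$ with $0<\be<1$ and $\delta$ small, so that $A_nt_n=(\log n)^{\be}\to\infty$ while $t_n\to0$. With $\be<1$ the residual factor $e^{O(A_nt_n)}=e^{O((\log n)^{\be})}$ is sub-polynomial in $n$, the source contribution is $\lesm A_n\la_nt_n^2\,\|f_n(t_n)\|_{\dot H^s}$ with $A_n\la_nt_n^2\to0$ polynomially in $n$ (again using $s<\tfrac12$), and the quadratic/cubic remainders are absorbed by the bootstrap; hence $\|w_n(t_n)\|_{\cH^s}=o\bigl(\|f_n(t_n)\|_{\cH^s}\bigr)$. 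Therefore $\|u_n(0)\|_{\cH^s}\lesm(\log n)^{-\delta}\to0$ while $\|u_n(t_n)\|_{\cH^s}\ge\tfrac12\|f_n(t_n)\|_{\cH^s}\gtrsim\kp_n(A_nt_n)^s\gtrsim(\log n)^{\be s-\delta}$; choosing $\delta\le\be s/2$ and relabelling brings both bounds to the stated form $(\log n)^{\mp\delta}$, with $t_n\to0$. I expect the error estimate of the second paragraph to be the main obstacle: the source $\D f_n$ and the potential $3f_n^2$ are genuinely large (each spatial derivative costs $\la_n$, the amplitude $A_n\gg\la_n$), so the delicate point is to show that the ODE approximation survives long enough—on the ODE clock $A_nt$—to witness the inflation while keeping $t_n\to0$; this is exactly where supercriticality $s<\tfrac12$ is used and where the logarithmic gains and losses in the statement originate.
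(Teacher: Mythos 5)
Your proposal is essentially the paper's (i.e.\ Burq--Tzvetkov's) argument: concentrated profile $\kp_n\la_n^{3/2-s}\vphi(\la_n x)$, comparison with the decoupled ODE solution $u_n(0,x)\Phi(u_n(0,x)t)$, an energy estimate on the difference whose exponential factor is controlled because $A_nt_n=(\log n)^{\be}$ is sub-polynomial, and the dephasing lower bound $\|f_n(t_n)\|_{H^s}\gtrsim\kp_n(A_nt_n)^s$, with the same calibration and the same use of $s<\tfrac12$; the paper merely runs the Gronwall step on a semi-classical energy $n^{-(1-s)}(\dots)+n^{-(2-s)}(\dots)$ rather than your coercive modification. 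One small caveat on that modification: $\|f_nw\|_{2}^2$ is not coercive in $w$ at times and points where $\Phi(u_n(0,x)t)$ vanishes (and $\Phi$ does oscillate through zero), so the safer route is the paper's, namely bounding $\|3f_n^2w\|_{L^2}\le 3\|f_n\|_\infty^2\,t\sup_\tau\|\ptd_tw(\tau)\|_{L^2}$ and applying Gronwall, which costs only the harmless factor $e^{O((A_nt_n)^2)}=e^{O((\log n)^{2\be})}$.
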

We here outline the proof of this proposition. The basic idea is to compare the solutions to the equation
    \begin{eqnarray}\label{eqn:cubic:series}
        \left\{
            \begin{split}
                &(\partial^2_t-\Delta)u_n+u^3_n=0\\
                &(u_n(0),\partial_tu_n(0))=(f_{1,n}(x),0)
            \end{split}
        \right.
    \end{eqnarray}
and these to the ODEs
    \begin{eqnarray}\label{eqn:ode:series}
        \left\{
            \begin{split}
               & v''_n+v^3_n=0 \\
               & (v_n(0),v'_n(0))=(f_{1,n}(x),0).
            \end{split}
        \right.
    \end{eqnarray}
Under a special choice of the initial data $f_{1,n}=\kappa_nn^{3/2-s}\phi(nx)$ with $\phi$ a nontrivial bump function on $\bR^3$, the solutions to the ODEs \eqref{eqn:ode:series} have an explicit representation $v_n(t,x)=\kappa_nn^{3/2-s}V(tk_nn^{3/2-s}\phi(nx))$, where $V$ solves
the ODE
    \begin{eqnarray}\label{eqn:ode}
        V''+V^3=0,\ \  V(0)=1,\ \ V'(0)=0.
    \end{eqnarray}
(Although the solution to the ODE \eqref{eqn:ode} can be represented explicitly with the help of Jacobian elliptic functions, we do not need to, since what we need is just the periodicity property of the solution.)
Then the following two basic facts finish the proof.
    \begin{itemize}
        \item $u_n$ and $v_n$ are very close to each other with respect to the semi-classical energy $E_n(u)$ defined by
            \[
                E_n(u)\equiv n^{-(1-s)}\sqrt{\|\partial_tu\|^2_{L^2}+\|\nabla u\|^2_{L^2}}+n^{-(2-s)}\sqrt{\|\partial_tu\|^2_{H^1}+\|\nabla u\|^2_{H^1}}
            \]
            and consequently $\|u_n-v_n\|_{H^s}\leq Cn^{-\epsilon}$; ( It is this fact that requires the regularity $s$ should be smaller that $1/2$.)
        \item Due to the periodicity of the solution to the ODE \eqref{eqn:ode}, by the explicit representation, we can do some calculations, which lead to $\|v_n\|_{H^s}\rightarrow\infty$ as $n\rightarrow \infty$. (It is this fact that requires $s>0$.)
    \end{itemize}

    Generally, we should consider the cubic wave equation of general datum  $(u_0,u_1)\in C^{\infty}_c\times C^{\infty}_c$
    \begin{eqnarray}\label{eqn:cubic}
        \left\{
            \begin{split}
                &(\partial^2_t-\Delta)u+u^3=0\\
                &(u(0),\partial_tu)=(u_0,u_1) \in C^{\infty}_c\times C^{\infty}_c.
            \end{split}
        \right.
    \end{eqnarray}
    Our goal is to show that the solution to generally ill-posed in $\cH^s$ for $s\in]0,1/2[$.

    In order to easily get the $H^s$ norm blow-up, we just try to add the solutions of ODEs
    \begin{eqnarray}\label{eqn:ode:series:strategy3}
    \left\{
        \begin{array}{rcl}
            \frac{d^2}{dt^2}v_n+v^3_n &=& 0\\
            (v_n(0),\frac{d}{dt}v_n(0))&=&(\psi_n(x),0)
        \end{array}
    \right.
    \end{eqnarray}
to these of the PDEs
    \begin{eqnarray}\label{eqn:cubic:series:strategy3}
    \left\{
        \begin{array}{rcl}
            (\partial^2_t-\Delta)u_n+u^3_n &=&0\\
            (u_n(0),\partial_tu_n(0)) &=& (u_0+\psi_n,u_1).
        \end{array}
    \right.
    \end{eqnarray}
And this leads to the difference equation for $w_n=u_n-v_n$
    \begin{eqnarray}\label{eqn:cubic:diff:strategy3}
    \left\{
        \begin{array}{rcl}
            (\partial^2_t-\Delta)w_n+w_n^3 &=& \Delta v_n-3w_n^2v_n-3w_nv^2_n\\
            (w_n(0),\partial_tw_n(0)) &=& (u_0,u_1)
        \end{array}
    \right.
    \end{eqnarray}
In this case, thanks to the smallness of the semi-classical energy $E_n(w_n(0))\sim{n}^{-(1-s)}$, one can follow the strategy listed above to obtain that $w_n(t)$ is small in the sense of $E_n(w_n(t))$ and thus small in $H^s$ for small time $t_n$, and the blow-up of $v_n(t_n)$ in $H^s$ forces $u_n(t_n,\cdot)$ to blow up in $H^s$. Here we are not going to write down all the details, but turn to the more general equations: the wave equation of power $3\leq p<5$.

\section{Nonlinear wave equation of power $3\leq p<5$}\label{sec:pnlw}
    The goal of this section is to show that the equation
    \begin{eqnarray*}
        (\ptd^2_t-\Delta)u+|u|^{p-1}u=0
    \end{eqnarray*}
is generically ill-posed in $\cH^s$ for $s\in (0,\fr{3}{2}-\fr{2}{p-1})$. The following proposition is just a quantitative statement of Proposition \ref{thm:main}. 
    \begin{prop}\label{prop:ilpos:gene}
        Given a $3D$ manifold $M$. For any $s\in(0,\fr{3}{2}-\fr{2}{p-1})$ fixed and any given data $(u_0,u_1)\in{C^{\infty}_c(M)}\times{C^{\infty}_c(M)}$ and its corresponding solution $u(t)$ to the equation \eqref{eqn:p}, there exist $\delta>0$, a sequence $(t_n)^{\infty}_{n=1}$ of positive numbers decreasing to zero and a sequence $\big(u_n(t)\big)^{\infty}_{n=1}$ of $C^{\infty}(M)$ functions such that
        \begin{eqnarray}\label{eqn:p:series}
            \begin{cases}
                &(\ptd^2_t-\Delta)u_n+|u_n|^{p-1}u_n=0\\
                &\big(u_n(0),\ptd_tu_n(0)\big)=(u_{0n},u_{1n})
            \end{cases}
        \end{eqnarray}
        with
        \[
            \|(u_{0n},u_{1n})-(u_0,u_1)\|_{\cH^s(M)}\leq{C}\log(n)^{-\delta}\rightarrow0\textrm{ as } n\rightarrow+\infty,
        \]
        but
        \[
            \|u(t_n)\|_{H^s(M)}\geq{C}\log(n)^{\delta}\rightarrow+\infty\textrm{ as }n\rightarrow+\infty.
        \]
    \end{prop}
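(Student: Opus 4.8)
The plan is to transplant the Burq--Tzvetkov argument behind Proposition~\ref{illposed:zerodata} to arbitrary smooth data and to the genuine power $p$, following the scheme of Section~\ref{sec:cnwe}. Working in a coordinate chart of $M$ (so that the rescaled metric below is a small perturbation of the flat one and finite speed of propagation confines everything), fix a nonnegative $\phi\in C^\infty_c(\bR^3)$, $\phi\not\equiv0$, take $\kp_n=(\log n)^{-\delta}$ with $\delta\in(0,s/2)$ to be fixed, and set
\[
\psi_n(x)=\kp_n\, n^{3/2-s}\,\phi(nx),\qquad (u_{0n},u_{1n})=(u_0+\psi_n,\,u_1)\in C^\infty_c(M)\times C^\infty_c(M).
\]
Since $\|\phi(n\,\cdot)\|_{\dot H^\sigma}\sim n^{\sigma-3/2}$, we have $\|(u_{0n},u_{1n})-(u_0,u_1)\|_{\cH^s}=\|\psi_n\|_{H^s}\sim\kp_n=(\log n)^{-\delta}\to0$, the advertised rate. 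Let $u_n$ denote the smooth solution of \eqref{eqn:p:series} from this data (it exists up to the time $t_n$ introduced below once the a priori bound of the next step is available), and let $v_n$ solve the pointwise-in-$x$ ODE $v_n''+|v_n|^{p-1}v_n=0$, $(v_n,v_n')|_{t=0}=(\psi_n,0)$, namely $v_n(t,x)=\psi_n(x)\,V\big(t\,|\psi_n(x)|^{(p-1)/2}\big)$, with $V$ the smooth, nonconstant, periodic solution of $V''+|V|^{p-1}V=0$, $V(0)=1$, $V'(0)=0$. Put $A_n:=\kp_n n^{3/2-s}$ (so $A_n^{(p-1)/2}\to\infty$), $\tau_n:=(\log n)^{2\delta/s}$ and $t_n:=\tau_n\,A_n^{-(p-1)/2}$.

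Everything then reduces to two estimates on $[0,t_n]$. First, using the explicit form of $v_n$ and the periodicity of $V$ (a wave-packet/stationary-phase computation identical in structure to the cubic one) one gets $\|v_n(t)\|_{H^s}\sim\kp_n\big(1+t\,A_n^{(p-1)/2}\big)^{s}$, so that $\|v_n(t_n)\|_{H^s}\gtrsim\kp_n\tau_n^{\,s}=(\log n)^{\delta}\to\infty$, while $t_n=(\log n)^{2\delta/s+\delta(p-1)/2}\,n^{-(3/2-s)(p-1)/2}\to0$. Second, for $w_n:=u_n-v_n$, which solves
\[
(\ptd_t^2-\D)w_n+|w_n|^{p-1}w_n=\D v_n-\big(|u_n|^{p-1}u_n-|v_n|^{p-1}v_n-|w_n|^{p-1}w_n\big),\qquad (w_n,\ptd_tw_n)|_{t=0}=(u_0,u_1),
\]
we propagate the semiclassical energy $E_n$ of Section~\ref{sec:cnwe}: here $E_n(w_n(0))\sim n^{-(1-s)}\to0$ because $(u_0,u_1)$ is fixed and smooth, and after the rescaling $(t,x)\mapsto(\tau,y)=(A_n^{(p-1)/2}t,\,nx)$ equation \eqref{eqn:p:series} becomes $U_{\tau\tau}-h_n^2\D_yU+|U|^{p-1}U=0$ with $h_n^2=n^2A_n^{-(p-1)}\to0$, a small-dispersion perturbation of the ODE, so a Gronwall/bootstrap argument keeps $E_n(w_n(t))\lesssim n^{-\eps}$ on $[0,t_n]$ as long as the rescaled time stays below the threshold $\tau\sim\log(1/h_n)\sim c\log n$, which it does since $\tau_n=(\log n)^{2\delta/s}\ll\log n$. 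Converting back, $\|w_n(t)\|_{H^s}\lesssim1$ on $[0,t_n]$, whence $\|u_n(t_n)\|_{H^s}\ge\|v_n(t_n)\|_{H^s}-\|w_n(t_n)\|_{H^s}\gtrsim(\log n)^{\delta}$, which is the assertion.

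The main obstacle, exactly as in \cite{BT07a}, is the comparison of the previous step, and it differs from the cubic case in two respects. First, for non-integer $p$ the map $z\mapsto|z|^{p-1}z$ is only of class $C^{\lfloor p\rfloor}$, so the second-order ($\dot H^2\times H^1$) part of $E_n$ cannot be run by differentiating the equation twice; one instead bounds $\|\D(|u_n|^{p-1}u_n-|v_n|^{p-1}v_n-|w_n|^{p-1}w_n)\|$ by Moser/fractional-Leibniz (or paradifferential) estimates, and it is here, together with the first-order estimate, that $3\le p<5$ is used. Second, one must check that the lifespan of the Gronwall estimate --- rescaled time $\tau\sim\log(1/h_n)$ --- is compatible with simultaneously forcing $\tau_n\to\infty$ (for blow-up of $\|v_n(t_n)\|_{H^s}$) and $t_n=\tau_nA_n^{-(p-1)/2}\to0$; this is precisely where the supercriticality hypothesis enters, since $h_n^2=n^2A_n^{-(p-1)}=\kp_n^{-(p-1)}\,n^{\,2-(3/2-s)(p-1)}\to0$ exactly because $s<\tfrac32-\tfrac2{p-1}$, and the same inequality makes $t_n\to0$, the admissible range being $0<\delta<s/2$. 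The remaining ingredients --- the contribution of the fixed data $(u_0,u_1)$ (small since $E_n(u_0,u_1)\sim n^{-(1-s)}$), the curvature terms produced by the chart, and the cross term (which carries one factor of $w_n$ and one of $v_n$ and is absorbed by Hölder in $x$ using the concentration of $v_n$ at scale $1/n$) --- are then routine to fit into the energy estimate.
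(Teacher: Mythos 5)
Your proposal follows essentially the same route as the paper: perturb the given smooth data by $\psi_n=\kappa_n n^{3/2-s}\phi(nx)$, compare the PDE solution with the exact ODE profile $v_n$ via a semiclassical energy/Gronwall estimate on $[0,t_n]$ with $t_n=(\log n)^{O(\delta)}(\kappa_n n^{3/2-s})^{-(p-1)/2}$, and conclude from the lower bound on $\|v_n(t_n)\|_{H^s}$ furnished by the periodicity of $V$ (Lemma \ref{lem:bt}); your small-dispersion rescaling is just a reparametrization of the paper's direct energy computation. The one substantive point your sketch glosses over is Lemma \ref{lem:bump}: for $p<5$ the bump function $\phi$ must be chosen so that the negative powers of $\phi$ produced when derivatives fall on $V\big(t(\kappa_n n^{q_1}\phi(nx))^{(p-1)/2}\big)$ remain integrable, which is what legitimizes the $H^1$ bound on $\Delta v_n$ and hence the second-order part of $E_n$ --- your appeal to Moser/paradifferential estimates addresses the limited smoothness of $z\mapsto|z|^{p-1}z$ but not this separate degeneracy at the boundary of the support of $\phi$.
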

    \begin{proof}
        We present the proof analogous to that in \cite{BT07a}. We work in a local coordinate near a fixed point of $M$, and will not distinguish this with the Euclidean space. By choosing the initial data $\big(u_{0n},u_{1n}\big)=\big(u_0+\psi_n,u_1\big)$ ( $\psi_n$ to be chosen later), we compare the solution $u_n(t)$ of the equation \eqref{eqn:p:series} to the ODE
        \begin{eqnarray}\label{ode:p:series}
            \begin{cases}
                &\fr{d^2}{dt^2}v_n+|v_n|^{p-1}v_n=0\\
                &\big(v_n(0),\fr{dv_n}{dt}(0)\big)=(\psi_n,0).
            \end{cases}
        \end{eqnarray}
        \begin{lem}
            The solution $V$ to the ODE
            \begin{equation}\label{ode}
                V''(t)+|V(t)|^{p-1}V(t)=0,\ V(0)=1,\ V'(0)=0
            \end{equation}
            is periodic. And by choosing $\psi_n(x)=\kappa_nn^{q_1}\phi(nx)$ ($\phi$ and $\kappa_n$ to be chosen later) with $q_1=\fr{3}{2}-s$, the solution $v_n(t)$ to the ODE \eqref{ode:p:series} has an explicit expression
            \[
                v_n(x,t)=\kappa_nn^{q_1}\phi(nx)V\Big(t\big(k_nn^{q_1}\phi(nx)\big)^{\fr{p-1}{2}}\Big).
            \]
        \end{lem}
        \begin{proof}
            Multiplying the ODE \eqref{ode} by $V$, we see
            \[
                \fr{d}{dt}\Big(\fr{1}{2}|V'(t)|^2+\fr{1}{p+1}|V|^{p+1}\Big)=0,
            \]
            from which we have
            \[
                \fr{1}{2}|V'(t)|^2+\fr{1}{p+1}|V|^{p+1}=\fr{1}{2}.
            \]
            Then by a qualitative analysis, we have $V$ is periodic. The left of the lemma is just a computation.
        \end{proof}
        In order to exploit a deeper property of $v_n$, we need to select the bump function $\phi$ carefully.
        \begin{lem}\label{lem:bump}
            There exists a nontrivial bump function $\phi$ supported in the unit ball $B(0,1)\in\bR^3$ such that
            \[
                \int_{\bR^3}\big|\phi_i\phi_j\phi_k\big|^2\phi^{p-5}dx < \infty,
            \]
            for any possible combination $i,j,k\in\{1,2,3\}$.
        \end{lem}
        By choosing such a bump function $\phi$ obtained in the lemma \ref{lem:bump} and set $t_n=[\log(n)]^{\delta_2}(\kappa_nn^{q_1})^{-\fr{p-1}{2}}$ and $\kappa_n=\log(n)^{-\delta_1}$( $\delta_1$ to be determined later), we have for $t\in[0,t_n]$,
        \begin{eqnarray*}
            \|\Delta(v_n)(t,\cdot)\|_{L^2(M)}&\leq&Ck_nn^{q_1}\big(t_n(k_nn^{q_1})^{\fr{p-1}{2}}\big)^2n^{\fr{1}{2}}\\
            \|\Delta(v_n)(t,\cdot)\|_{H^1(M)}&\leq&Ck_nn^{q_1}\big(t_n(k_nn^{q_1})^{\fr{p-1}{2}}\big)^3n^{\fr{3}{2}}\\
            \|\nabla^kv_n(t,\cdot)\|_{L^{\infty}(M)}&\leq&C\big(t_n(k_nn^{q_1})^{\fr{p-1}{2}}\big)^kk_nn^{q_1+k}\quad\textrm{for}\quad{k=0,1,2}.
        \end{eqnarray*}
        By working on the semi-classical energy $E_n(u)$ defined by
        \begin{equation*}
            E_n(u)\equiv{n^{-q_2}}\sqrt{\|\ptd_tu\|^2_{L^2(M)}+\|\nabla{u}\|^2_{L^2(M)}}
                              +n^{-q_2-1}\sqrt{\|\ptd_tu\|^2_{H^1(M)}+\|\nabla{u}\|^2_{H^1(M)}},
        \end{equation*}
        with $q_2=\fr{5-p}{2}(\fr{3}{2}-\fr{1}{p-1}-s)$, we can show that for every small times, $u_n$ and $v_n$ are close to each other with respect to $E_n$ but these small times are not long enough to drive these both away from each other in $H^s$. Here is the statement
        \begin{lem}\label{lem:diff:clos}
            There exist $\epsilon>0,\ \delta_2>0$ and $C>0$ such that for $n\gg1$ and every $t\in[0,t_n]$, we have
            \[
                E_n(u_n(t)-v_n(t))\leq Cn^{-\epsilon},
            \]
            and hence
            \[
                \|u_n(t)-v_n(t)\|_{H^s(M)}\leq Cn^{-\epsilon}.
            \]
        \end{lem}
        \begin{proof}
            Set $w_n=u_n-v_n$, then $w_n$ solves the equation
            \begin{eqnarray*}
                (\ptd^2_t-\Delta)w_n&=&\Delta{v_n}+|v_n|^{p-1}v_n-|v_n+w_n|^{p-1}(v_n+w_n)\equiv{F}\\
                (w_n(0,x),\ptd_tw_n(0,x))&=&(u_0,u_1).
            \end{eqnarray*}
            By the energy inequality for the wave equation, we get
            \[
                \fr{d}{dt}\big(E_n(w_n(t))\big)\leq{C}n^{-q_2}\|F(t,\cdot)\|_{L^2(M)}+Cn^{-(q_2+1)}\|F(t,\cdot)\|_{H^1(M)},
            \]
            By using the bounds for $\|\Delta(v_n)(t,\cdot)\|_{L^2(M)}$ and $\|\Delta(v_n)(t,\cdot)\|_{H^1(M)}$, we have
            \begin{equation}\label{ine:eng:1}
                \fr{d}{dt}\big(E_n(w_n(t))\big)\leq C[\kappa_nn^{q_1+1/2-q_2}\big(t_n(\kappa_nn^{q_1})^{\fr{p-1}{2}}\big)^3+
                       n^{-q_2}\|G(t,\cdot)\|_{L^2(M)}+n^{-(q_2+1)}\|G(t,\cdot)\|_{H^1(M)}\big)]
            \end{equation}
            where
            \[
                G\equiv|v_n|^{p-1}v_n-|v_n+w_n|^{p-1}(v_n+w_n).
            \]
            Writing for $t\in[0,t_n]$,
            \[
                w_n(t,x)=\int^t_0\ptd_sw_n(s,x)ds,
            \]
            we obtain
            \begin{equation}\label{ene:hk}
                \|w_n(t,\cdot)\|_{H^k(M)}\leq t_n\sup_{0\leq\tau\leq{t}}\|\ptd_tw_n(\tau,\cdot)\|_{H^k(M)}.
            \end{equation}
            In particular
            \[
                \|w_n(t,\cdot)\|_{L^2(M)}\leq{t_n}n^{q_2}e_n\big(w_n(t)\big),
            \]
            where $e_n(w_n(t))\equiv\sup_{0\leq\tau\leq{t}}E_n(w_n(\tau))$. And hence we have
            \begin{eqnarray*}
                \|w_n(t,\cdot)\|_{H^1(M)}&\leq&{C}n^{q_2}e_n(w_n(t)),\\
                \|w_n(t,\cdot)\|_{H^2(M)}&\leq&{C}n^{q_2+1}e_n(w_n(t)).
            \end{eqnarray*}
            Thanks to the Gagliardo-Nirenberg interpolation, we have
            \begin{equation}\label{ene:infty}
                \|w_n(t,\cdot)\|_{L^{\infty}}\leq{C}\|w_n(t,\cdot)\|^{1/2}_{H^2(M)}\|w_n(t,\cdot)\|^{1/2}_{H^1(M)}\leq{C}n^{q_2+1/2}e_n(w_n(t)),
            \end{equation}
             Using \eqref{ene:hk}, \eqref{ene:infty}, the $L^{\infty}$-bound for $v_n$ together with H\"older inequality and the inequality $\big||a+b|^{p-1}(a+b)-|a|^{p-1}a\big|\leq C(|a|^{p-1}+|b|^{p-1})|a|$ for some positive constant $C$, we obtain
            \begin{eqnarray*}
                n^{-q_2}\|G(t,\cdot)\|_{L^2(M)}&\leq&Ct_n\Big((\kappa_nn^{q_1})^{p-1}e_n(w_n(t))+(n^{q_2+1/2})^{p-1}e_n(w_n(t))^p\Big),\\
                n^{-(q_2+1)}\|G(t,\cdot)\|_{H^1(M)}&\leq&Ct_n(\kappa_nn^{q_1})^{\fr{p-1}{2}}t_n\Big((\kappa_nn^{q_1})^{p-1}e_n(w_n(t))+\kappa_nn^{q_1}(n^{q_2+1/2})^{p-2}e_n(w_n(t))^p\Big)\\
                                                   & & +Ct_n\Big((\kappa_nn^{q_1})^{p-1}e_n(w_n(t))+(n^{q_2+1/2})^{p-1}e_n(w_n(t))^p\Big)
            \end{eqnarray*}
            Therefore, coming back to \eqref{ine:eng:1}, we get
            \begin{eqnarray*}
                \fr{d}{dt}\big(E_n(w_n(t))\big)&\leq&C[\kappa_nn^{q_1+1/2-q_2}\big(t_n(\kappa_nn^{q_1})^{\fr{p-1}{2}}\big)^3\\
                                                &&+t_n\Big((\kappa_nn^{q_1})^{p-1}e_n(w_n(t))+(n^{q_2+1/2})^{p-1}e_n(w_n(t))^p\Big)\\ &&+t_n(\kappa_nn^{q_1})^{\fr{p-1}{2}}t_n\Big((\kappa_nn^{q_1})^{p-1}e_n(w_n(t))+\kappa_nn^{q_1}(n^{q_2+1/2})^{p-2}e_n(w_n(t))^p\Big)].
            \end{eqnarray*}
            We first suppose that $e_n(w_n(t))\leq 1$ which hold for small values of $t$ due to the smallness of $e_n(w_n(0))=C{(u_0,u_1)} n^{-q_2}\ll n^{-\epsilon}$ for any $\epsilon$ sufficiently close to zero and the continuity of $e_n(w_n(t))$. We then get
            \[
                \fr{d}{dt}E_n(w_n(t))\leq{C}\kappa_nn^{q_1+1/2-q_2}\big(t_n(\kappa_nn^{q_1})^{\fr{p-1}{2}}\big)^3+C\big(t_n(\kappa_nn^{q_1})^{\fr{p-1}{2}}\big)^2f(n)e_n(w_n(t))
            \]
            where $f(n)\equiv\big(\kappa_nn^{q_1}\big)^{\fr{p-1}{2}}+\log(n)^{-\delta_2}\big(\kappa_nn^{q_1}\big)^{-\fr{p-1}{2}}(n^{q_2+1/2})^{p-1}+\big(\kappa_nn^{q_1}\big)^{1-\fr{p-1}{2}}(n^{q_2+1/2})^{p-2}$. Thanks to the special choices of $q_1$, $q_2$ and the super-criticality of $s$, we have the bound $f(n)\leq (\kappa_nn^{q_1})^{\fr{p-1}{2}}$. Now going through a Gronwall argument for $t\in[0,t_n]$, we obtain
            \begin{eqnarray*}
                e_n(w_n(t))&\leq&\fr{\kappa_nn^{q_1+1/2-q_2}(\log{n})^{3\delta_2}}{(\log{n})^{2\delta_2}(\kappa_nn^{q_1})^{\fr{p-1}{2}}}\times{e^{(\log{n})^{2\delta_2}(\kappa_nn^{q_1})^{\fr{p-1}{2}}t}}\\
                           &\leq&\fr{\kappa_nn^{q_1+1/2-q_2}}{(\kappa_nn^{q_1})^{\fr{p-1}{2}}}\log(n)^{\delta_2}e^{\log(n)^{3\delta}}\\
                           &\leq&\kappa^{1-\fr{p-1}{2}}_nn^{q_1+1/2-q_2-q_1\fr{p-1}{2}}\log(n)^{\delta_2}e^{\log(n)^{3\delta_2}}\\
                           &\leq&\log(n)^{\delta_1\fr{p-3}{2}+\delta_2}e^{\log(n)^{3\delta_2}}n^{s-(\fr{3}{2}-\fr{2}{p-1})}.
            \end{eqnarray*}
            Thus for $\delta_2$ sufficiently small, there exists some $\epsilon>0$ such that
            \[
                E_n(w_n(t))\leq Cn^{-\epsilon}.
            \]
            In particular, one has for $t\in [0,t_n]$
            \begin{equation}\label{ene:h1}
                \|\ptd_tw_n(t,\cdot)\|_{L^2(M)}+\|w_n(t,\cdot)\|_{H^1(M)}\leq{n^{q_2-\epsilon}}=Cn^{\fr{5-p}{2}(q_1-\fr{1}{p-1})-\epsilon}.
            \end{equation}
            Next we have for $t\in [0,t_n]$,
            \[
                \|w_n(t,\cdot)\|_{L^2(M)}\leq{C}t_n\sup_{0\leq\tau\leq{t}}\|\ptd_tw_n(\tau,\cdot)\|_{L^2(M)}\leq{C}\log(n)^{\delta_2+\delta_1\fr{p-1}{2}}n^{q_1(3-p)-\fr{5-p}{2(p-1)}-\epsilon}.
            \]
        Interpolating between this last inequality with the inequality \eqref{ene:h1} yields
        \begin{equation}\label{ene:hs}
            \|w_n(t,\cdot)\|_{H^s(M)}\leq{C}\log(n)^{(1-s)(\delta_2+\delta_1\fr{p-1}{2})}n^{g(s,p)}n^{-\epsilon},
        \end{equation}
        where $g(s,p)\equiv-\fr{p-1}{2}s^2+\fr{7p-15}{4}s-\fr{(p-2)(3p-7)}{2(p-1)}=-\fr{1}{2}\big((p-1)s-2(p-2)\big)\big(s-(\fr{3}{2}-\fr{2}{p-1})\big)$ is non-positive for $s\in(0,\fr{3}{2}-\fr{2}{p-1})$ and $3\leq p<5$. Thus, we finally get
        \[
            \|u_n(t,\cdot)-v_n(t,\cdot)\|_{H^s(M)}\leq Cn^{-\epsilon}.
        \]
        This finishes the proof of lemma \ref{lem:diff:clos}.
        \end{proof}
            Now using lemma \ref{lem:diff:clos}, we have
            \[
                \|u_n(t_n,\cdot)\|_{H^s(M)}\geq\|v_n(t_n,\cdot)\|_{H^s(M)}-Cn^{-\epsilon}.
            \]
            On the other hand, by the representation of $v_n$, we have for $n$ large enough
            \[
                \|v_n(t_n,\cdot)\|_{H^s(M)}\geq{C}\kappa_nn^{q_1}n^{-(3/2-s)}\big(t_n(\kappa_nn^{q_1})^{\fr{p-1}{2}}\big)^s=C[\log(n)]^{-\delta_1+s\delta_2}.
            \]
            Rigorously, this is just a consequence of the following lemma.
        \begin{lem}\cite{BT07a}\label{lem:bt}
            Consider a smooth non constant $2\pi$ periodic function $V$ and two functions $\phi,\psi\in C^{\infty}_c(\bR^d)$ such that $\phi\psi$ is not identically vanishing. Then there exists $C>0$ such that for any $\lambda>1$ and any $s\geq0$
            \[
                \|\psi(\cdot)V(\lambda\phi(\cdot))\|_{H^s(\bR^d)}\geq\fr{\lambda^s}{C}-C.
            \]
        \end{lem}
    \end{proof}

\section{Instantaneous Ill-posedness for the wave equation}\label{sec:inst}
    Now we are ready to prove that the equation \eqref{eqn:p} is actually ill-posed instantaneously for generic initial data
\begin{prop}\label{prop:ins:il}
    Let us fix $s\in]0,\fr{3}{2}-\fr{2}{p-1}[$. Then for any $(u_0,u_1)\in C^{\infty}_c\times C^{\infty}_c$, the equation \eqref{eqn:p}, satisfying in addition the finite speed of propagation, has no solution in $L^{\infty}([-T,T];\cH^s),T>0$ with initial data $(u_{0},u_{1})$.
\end{prop}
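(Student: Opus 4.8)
The plan is to manufacture, starting from the given smooth pair, a single datum $(f_0,f_1)\in\cH^s\setminus C^\infty_c$ that is as close as we like to $(u_0,u_1)$ in $\cH^s$ and at which \eqref{eqn:p} admits no solution in $L^\infty([-T,T];\cH^s)$, in the class enjoying finite speed of propagation, for any $T>0$; this is the perturbed datum meant in the statement and in the density theorem of the introduction. The datum will be a superposition $f_0=u_0+\sum_{j\geq1}\psi_{n_j}(\cdot-x_j)$, $f_1=u_1$, of countably many of the ODE-scale bumps produced in Section~\ref{sec:pnlw}, pinned at distinct points $x_j$ and at scales so fine that, up to their (very short) blow-up times $t_{n_j}$, their backward light cones are pairwise disjoint. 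Finite speed of propagation then forces any hypothetical solution to coincide near $x_j$ with the solution $u_{n_j}$ of Proposition~\ref{prop:ilpos:gene}, whose $\cH^s$-norm is large at time $t_{n_j}$ with the large part concentrated in a tiny ball about $x_j$; since $t_{n_j}\downarrow0$, this produces an $\cH^s$-norm that is unbounded on every interval $[0,T']$, a contradiction.

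\textbf{Step 1 (the datum).} Work in a fixed chart around a point $x_*\in\bT^3$ and recall from Section~\ref{sec:pnlw}: $\psi_n=\kappa_nn^{q_1}\phi(nx)$ with $q_1=\tfrac32-s$, $\kappa_n=\log(n)^{-\delta_1}$; the ODE profile $v_n$, supported in the ball of radius $1/n$ about the bump centre for all times, with $\|v_n(t_n)\|_{H^s}\geq C\log(n)^{\delta}$ by Lemma~\ref{lem:bt}; the solution $u_n$ of \eqref{eqn:p} with data $(u_0+\psi_n,u_1)$, which satisfies $\|u_n(t)-v_n(t)\|_{\cH^s}\leq Cn^{-\epsilon}$ for $t\in[0,t_n]$ by Lemma~\ref{lem:diff:clos}; and $\|\psi_n\|_{\cH^s}\leq C\log(n)^{-\delta}$ (here $\delta,\epsilon>0$ are the constants of Proposition~\ref{prop:ilpos:gene}). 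Set $x_j:=x_*+2^{-j}e_1$ and let $\psi_{n_j}(\cdot-x_j)$ denote $\psi_{n_j}$ re-centred at $x_j$. I would choose $n_1<n_2<\cdots$ so large that simultaneously: (i) $\sum_j\log(n_j)^{-\delta}<\eta$, so that $(f_0,f_1)\in\cH^s$ with $\|(f_0,f_1)-(u_0,u_1)\|_{\cH^s}<\eta$, while the heights $\kappa_{n_j}n_j^{q_1}\to\infty$ accumulate at $x_*$, forcing $f_0\notin C^\infty_c$; (ii) with $R_j$ between $1/n_j+2t_{n_j}$ and $\tfrac13\mathrm{dist}(x_j,\{x_{j'}\}_{j'\neq j})\sim2^{-j}$ (possible once $1/n_j+2t_{n_j}\ll2^{-j}$), the balls $B(x_j,R_j)$ are pairwise disjoint and $B(x_j,R_j)$ meets only the single bump $\psi_{n_j}(\cdot-x_j)$ among all of them; (iii) $R_j^{\,s}\log(n_j)^{\delta}\to\infty$ while $R_j^{-s}n_j^{-\epsilon}\to0$ — all three hold, e.g., for $n_j\geq2^{2^{j^2}}$.

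\textbf{Step 2 (finite speed of propagation).} Assume for contradiction that for some $T>0$ there is $u\in L^\infty([-T,T];\cH^s)$ solving \eqref{eqn:p} with data $(f_0,f_1)$ and having finite speed of propagation. Fix $j$ with $t_{n_j}<T$. On $B(x_j,R_j)$ the pair $(f_0,f_1)$ equals $(u_0+\psi_{n_j}(\cdot-x_j),u_1)$, which is exactly the datum of the solution $u_{n_j}$ of Proposition~\ref{prop:ilpos:gene} with its bump moved to $x_j$ (the estimates in that proof do not see where in the chart the bump or $(u_0,u_1)$ sit). Since $t_{n_j}<R_j$, finite speed of propagation (the uniqueness-in-the-backward-cone property built into the statement) gives $u\equiv u_{n_j}$ on $\{(t,x):|x-x_j|<R_j-|t|,\ |t|\leq t_{n_j}\}$, hence $u(t_{n_j})=u_{n_j}(t_{n_j})$ on $B(x_j,R_j-t_{n_j})\supset B(x_j,1/n_j)$.

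\textbf{Step 3 (localization and conclusion; the hard part).} Choose $\eta_j\in C^\infty_c(B(x_j,R_j-t_{n_j}))$ with $\eta_j\equiv1$ on $B(x_j,1/n_j)$ and $\|\eta_j\|_{\mathrm{Mult}(H^s)}\leq CR_j^{-s}$, available because $0<s<1$. Since $u(t_{n_j})=u_{n_j}(t_{n_j})$ on $\mathrm{supp}\,\eta_j$ and $\eta_jv_{n_j}(t_{n_j})=v_{n_j}(t_{n_j})$, Lemma~\ref{lem:diff:clos} and Lemma~\ref{lem:bt} give
\begin{multline*}
R_j^{-s}\|u(t_{n_j})\|_{H^s}\gtrsim\|\eta_ju(t_{n_j})\|_{H^s}=\|\eta_ju_{n_j}(t_{n_j})\|_{H^s}\\
\geq\|v_{n_j}(t_{n_j})\|_{H^s}-\|\eta_j\big(u_{n_j}-v_{n_j}\big)(t_{n_j})\|_{H^s}\gtrsim\log(n_j)^{\delta}-CR_j^{-s}n_j^{-\epsilon},
\end{multline*}
so $\|u(t_{n_j})\|_{\cH^s}\gtrsim R_j^{\,s}\log(n_j)^{\delta}-Cn_j^{-\epsilon}\to\infty$ as $j\to\infty$ by (iii). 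Since $t_{n_j}\downarrow0$, it follows that $\sup_{|t|\leq T'}\|u(t)\|_{\cH^s}=\infty$ for every $T'>0$, contradicting $u\in L^\infty([-T,T];\cH^s)$; hence no such solution exists for any $T>0$. The genuinely delicate point is this step: one must turn ``$u$ agrees with the blowing-up $u_{n_j}$ on a ball of shrinking radius $R_j$'' into a bona fide lower bound for $\|u(t_{n_j})\|_{\cH^s}$ over all of $\bT^3$, which is why the perturbations are kept at the ODE scale — so that the large part $v_{n_j}(t_{n_j})$ of $u_{n_j}(t_{n_j})$ stays in $B(x_j,1/n_j)$ and is untouched by propagation — and why $n_j$ must be chosen astronomically large relative to $2^{-j}$, so that the factor $R_j^{\,s}$ lost to the cutoff is still overwhelmed by $\log(n_j)^{\delta}$. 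A secondary subtlety is that the whole argument leans on the a priori finite-speed-of-propagation hypothesis, the only form of local uniqueness available for this supercritical equation.
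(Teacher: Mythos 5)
Your proposal is correct and follows essentially the same route as the paper: superpose countably many ODE-scale bumps $\psi_{n_j}$ at centres accumulating at a point, chosen so small in $\cH^s$ that the perturbed datum stays close to $(u_0,u_1)$ yet leaves $C^\infty_c$, use the disjointness of the backward light cones and finite speed of propagation to identify any hypothetical solution locally with the single-bump solutions of Proposition~\ref{prop:ilpos:gene}, and conclude from the localized blow-up of $v_{n_j}(t_{n_j})$ at times $t_{n_j}\downarrow 0$. The only difference is technical: where you localize with an explicit cutoff of multiplier norm $O(R_j^{-s})$ and absorb that loss by taking $n_j$ astronomically large, the paper instead invokes the Sobolev--Slobodeckij characterization of $H^s$ on the shrunken supports $K_{n_k}$ to pass from the local to the global norm.
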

\begin{proof}
   Let $\big(\psi_n(x)\big)$ be a sequence of nontrivial bump function such that $\psi_n$ is supported around the point $x_n=(x_{n,1},x'_n=0)$ with $|x_{n,1}|\rightarrow0$ to be specified. As a consequence, $\psi_n$ is supported in the set
   \[
        \{x\in\bR^3\colon|x_1-x_{n,1}|+|x'|\leq\fr{C}{n}\}.
   \]
   Now let $v_n$ be the solution of the ODE \eqref{ode:p:series} with the initial data $\psi_n$. Notice that for any fixed $n$, we have that $\|\psi_n\|_{H^s}\sim \big(\log(n)\big)^{-\delta_1}$. This allow us to consider a sub-sequence $\{n_k\}$ such that $n_k\leq2^{-k}$ and $\|\psi_n\|_{\cH^s}\leq 2^{-k}$. Select $x_{n_k,1}=\fr{1}{k^2}$, then the sets
   \[
        K_{n_k}=\{x\in\bR^3\colon |x_1-x_{n_k,1}|+|x'|\leq \fr{C}{2n}-Ct_{n_k}\}
   \]
   are disjoint with each other for two different $k$'s. And then by Lemma \ref{lem:bt} and Proposition \ref{prop:ilpos:gene}, the solution $u_{n_k}$ to the equation \eqref{eqn:p:series} with initial data $(u_0+\phi_{n_k},u_1)$ satisfy
   \[
        \|u_{n_k}(t_{n_k},\cdot)\|_{\cH^s(K_{n_k})}\geq \log(n_k)^{\alpha}
   \]
   for some positive $\alpha$, as is shown in Proposition \ref{prop:ilpos:gene}. Now consider the initial value problem
   \begin{eqnarray*}
        \begin{cases}
            &(\ptd^2_t-\Delta)u_{\infty}+|u_{\infty}|^{p-1}u_{\infty}=0\\
            &(u_{\infty,0}(0),u_{\infty,1}(0))=(u_0+\sum_{k\geq k_0}\psi_{n_k},u_1)
        \end{cases}
   \end{eqnarray*}
   where $k_0$ is sufficiently large. And thus by the disjointness of $K_n$, the finite speed of propagation and the Sobolev-{Slobodeckij} characterization of fractal Sobolev space on bounded domain (see \cite{marschall1987trace}), we have for arbitrary $t_{n_k}>0$ with $n_k$ sufficiently large
   \[
        \|u_{\infty}(t_{n_k},\cdot)\|_{H^s}\geq \|u_{\infty}(t_{n_k},\cdot)\|_{H^s(K_n)}\sim \|u_{n_k}(t_{n_k},\cdot)\|_{H^s(K_n)}\geq \log(n_k)^{\alpha},
   \]
   and hence
   \[
        {\lim\text{sup}}_{k\rightarrow\infty}\|u_{\infty}(t_{n_k},\cdot)\|_{H^s}=+\infty.
   \]
   To finish the proof of Proposition \ref{prop:ins:il}, it remains to show that $(u_0+\sum_{k\geq k_0}\psi_{n_k},u_1)\in \cH^s-C^{\infty}_c$, which is just a consequence of the selection of $\psi_k$'s and the fact that $\|u_0+\sum_{k\geq k_0}\psi_k\|_{C_0}=\infty$.
   \end{proof}

\bibliographystyle{plain}
\bibliography{prob2topo}

\end{document}